\numberwithin{equation}{section}
\newtheorem{thm}{Theorem}[section]
\newtheorem{lma}[thm]{Lemma}
\newtheorem{cor}[thm]{Corollary}
\newtheorem{prop}[thm]{Proposition}
\newcommand{\R}{\mathbb{R}}
\newcommand{\N}{\mathbb{N}}
\providecommand{\norm}[1]{\lVert#1\rVert}
\renewcommand{\geq}{\geqslant}
\renewcommand{\leq}{\leqslant}
\renewcommand{\epsilon}{\varepsilon}
\renewcommand{\i}{\mathbf{i}}
\title{Dimensions of equilibrium measures\\ on a class of planar  self-affine sets}
\author{Jonathan M. Fraser$^1$, Thomas Jordan$^2$ \& Natalia Jurga$^3$ \\ \\
\emph{$^1$School of Mathematics and Statistics,}\\ \emph{University of St Andrews, St Andrews, KY16 9SS, UK}\\ \emph{E-mail}: jmf32@st-andrews.ac.uk
\\ \\
\emph{$^2$School of Mathematics,}\\ \emph{University of Bristol, Bristol, BS8 1SN, UK}\\ \emph{E-mail}:  thomas.jordan@bristol.ac.uk 
\\ \\
\emph{$^3$ Department of Mathematics,}\\ \emph{University of Surrey, Guildford, GU2 7XH, UK}\\ \emph{E-mail}:  N.Jurga@surrey.ac.uk 
 }
\begin{document}

\maketitle

\begin{abstract}
We study equilibrium measures (K\"{a}enm\"{a}ki measures) supported on self-affine sets generated by a finite collection of diagonal and anti-diagonal matrices acting on the plane and satisfying the strong separation property.  Our main result is  that such measures are exact dimensional and the dimension satisfies the Ledrappier-Young formula, which gives an explicit expression for the dimension in terms of the entropy and Lyapunov exponents as well as the dimension of a coordinate projection of the measure.  In particular, we do this by showing that the K\"aenm\"aki measure is equal to the sum of (the pushforwards) of two Gibbs measures on an associated subshift of finite type. \\

\emph{Mathematics Subject Classification} 2010:  primary: 37C45; secondary: 28A80.

\emph{Key words and phrases}: self-affine set, K\"aenm\"aki measure, quasi-Bernoulli measure, exact dimensional, Ledrappier-Young formula.
\end{abstract}

\section{Introduction} \label{intro}

\subsection{ The Ledrappier-Young formula and dimensions of measures}

Computing the dimensions of measures invariant under a non-conformal dynamical system is a challenging problem, which has attracted a lot of attention in the literature in recent years.  The `Ledrappier-Young formula' refers to a dimension formula originating with \cite{ledrappieryoung1, ledrappieryoung2}, which gives the exact dimension of a measure in a non-conformal setting in terms of entropy, Lyapunov exponents and dimensions of projected measures.  The measures originally considered by Ledrappier-Young were invariant measures for $C^2$ diffeomorphisms, but the formula has shown up in various contexts since their original work.  B\'ar\'any and K\"aenm\"aki \cite{baranykaenmaki} proved that any self-affine measure in the plane satisfies the Ledrappier-Young formula and it is another interesting problem to consider self-affine measures in higher dimensional space. For self-affine measures in higher dimensional space whose Lyapunov exponents are all distinct (which is a generic condition in the sense of \cite[Lemma 6.2]{baranykaenmakikoivusalo}) this problem was  solved  by \cite[Theorem 2.3]{baranykaenmaki}.  Another direction is to consider more general measures supported on self-affine sets, noting that self-affine measures correspond to Bernoulli measures on the associated  shift space.  With some additional assumptions B\'ar\'any and K\"aenm\"aki \cite{baranykaenmaki} extended their results to include certain classes of quasi-Bernoulli measures, i.e., measures where one has the Bernoulli property up to a uniform constant. 

 In this paper we consider a natural class of measures which are \emph{not} quasi-Bernoulli and prove that they satisfy the appropriate version of the Ledrappier-Young formula.  One of our main tools, which may be of independent interest, is that, even though our measures are not quasi-Bernoulli, we show they are equal to the sum of (the pushforwards) of two quasi-Bernoulli measures on an associated subshift of finite type. This enables us to reduce the study of measures on equilibrium states on these self-affine sets to the study of Gibbs measures on graph directed self-similar systems. There is a paper in preparation by De-Jun Feng which shows that all projections of ergodic measures onto self-affine sets are exact dimensional. In our specific setting we are able to connect the dimension to a one-dimension graph-directed system which gives more information about the dimension (e.g. Corollaries \ref{cor2.2} and \ref{cor2.3}).

The \emph{upper} and \emph{lower local dimensions} of a Borel probability measure $\nu$ at a point $x$ in its support are defined by
\[
\overline{\dim}_{\mathrm{loc}}(\nu,x) \  = \ \limsup_{r \to 0} \frac{\log \nu\big(B(x,r) \big)}{\log r}  \quad \text{and} \quad  \underline{\dim}_{\mathrm{loc}}(\nu,x) \  = \ \liminf_{r \to 0} \frac{\log \nu\big(B(x,r) \big)}{\log r}.
\]
If the upper and lower local dimensions coincide, we call the common value the \emph{local dimension} and denote it by $\dim_{\mathrm{loc}}(\nu,x)$.   The measure $\nu$ is \emph{exact dimensional} if there exists a constant $\alpha$ such that the local dimension exists and equals $\alpha$ at $\nu$ almost all points. The lower Hausdorff dimension of $\nu$ is defined by
\[
\underline{\dim}_{\mathrm{H}} \nu = \mathrm{essinf}_{x \sim \nu} \underline{\dim}_{\mathrm{loc}} (\nu, x)
\]
and the upper packing dimension is 
\[
\overline{\dim}_{\mathrm{P}} \nu = \mathrm{esssup}_{x \sim \nu} \overline{\dim}_{\mathrm{loc}} (\nu, x).
\]
The lower packing and upper Hausdorff dimensions are defined similarly but, in particular, if $\nu$ is exact dimensional then all of these definitions coincide with the exact dimension $\alpha$.

\subsection{Our class of planar self-affine sets} \label{class}

Here we introduce our class of self-affine sets and the class of measures we study.  These self-affine sets were introduced in \cite{fraser_box} and were designed as a natural extension of the self-affine carpets introduced by Bedford-McMullen and developed by several others \cite{bedford,mcmullen,baranski,lalley-gatz,fengwang}.  In general, \emph{carpets} refer to planar self-affine sets generated by diagonal matrices.  The key difference in the sets we consider here is the presence of anti-diagonal matrices, which causes the system to be irreducible and fail the totally dominated splitting condition.  This class of self-affine sets was  also recently considered by Morris \cite{morris}, who derived an explicit formula for the pressure allowing very straightforward calculation of the affinity and box dimensions.

Let $\mathcal{I} = \{1, \dots, d\}$ be a finite alphabet and $\{S_i\}_{i \in \mathcal{I}}$ a finite collection of affine maps acting on the plane such that the associated linear parts are contracting non-singular $2 \times 2$ real-valued matrices with non-negative entries which are all either diagonal or anti-diagonal and we assume the collection contains at least one of each.  
In particular, we order the maps in the following way: let $S_i=A_i + t_i$ where
$$A_i= 
\begin{bmatrix}
a_i & 0 \\
0 & b_i
\end{bmatrix}$$
for $i<l$
and
$$A_i= 
\begin{bmatrix}
0& a_i \\
b_i & 0
\end{bmatrix}$$
for $i \geq l$
where $l-1$ is the number of diagonal matrices in the IFS, that is, $1<l \leq |\mathcal{I}|=d$. We will also assume that for some $1\leq i\leq l-1$ we have that $a_i\neq b_i$ (so our system is not self-similar).  

We may assume for convenience that each $S_i$ maps the unit square into itself.  It is well-known that there exists a unique non-empty compact set $F \subseteq [0,1]^2$ satisfying:
\[
F \ = \ \bigcup_{i \in \mathcal{I}} S_i(F)
\]
which is the \emph{self-affine set} corresponding to the iterated function system (IFS) $\{S_i\}_{i \in \mathcal{I}}$.

For $k \in \mathbb{N}$ let $\mathcal{I}^k$ denote words of length $k$ over $\mathcal{I}$ and let
\[
\mathcal{I}^* = \bigcup_{k \in \mathbb{N}} \mathcal{I}^k
\]
be the set of all finite words over $\mathcal{I}$.  Let $\Sigma = \mathcal{I}^\mathbb{N}$ be the set of all infinite words over $\mathcal{I}$ and equip this space with the product topology and any compatible metric and let $\sigma: \Sigma \to \Sigma$ be the left shift.  For $\i=(i_1, i_2, \dots ) \in \Sigma$, write $\i|_k = (i_1, i_2, \dots, i_k) \in \mathcal{I}^k$ for the restriction of $\i$ to its first $k$ symbols.  For $\i = (i_1, i_2, \dots, i_k) \in \mathcal{I}^k$, write
\[
S_\i \ = \ S_{i_1} \circ \cdots \circ S_{i_k}.
\]
For $\i \in \mathcal{I}^*$ write $1 > \alpha_1(\i) \geq \alpha_2(\i) >0$ for the singular values of the linear part of the affine map $S_\i $. 

The shift space $(\Sigma, \sigma)$ gives a useful symbolic coding of $F$, via the following correspondence.  Let $\Pi: \Sigma \to \mathbb{R}^2$ be defined by
\[
\Pi(\i) \ =  \bigcap_{k=1}^{\infty} S_{\i|_k} ([0,1]^2)
\]
where $\i=(i_1, i_2, \dots ) \in \Sigma$.  It is straightforward to see that $F = \Pi(\Sigma)$ and generally $\Pi$ need not be injective.

\subsection{Entropy and  Lyapunov exponents}

Let $m$ be any shift ergodic Borel probability measure on $\Sigma$ and let $\mu = m \circ \Pi^{-1}$ be the push forward of $m$ onto the fractal $F$.  The support of $\mu$ is contained in $F$, although in general the inclusion can be strict.  We say $\mu$ is an ergodic measure for the self-affine set $F$ and such measures are our main object of study in this paper.

The following classical result follows from the subadditive ergodic theorem. 

\begin{prop}[Sub-additive ergodic theorem] \label{oseledets}
There exist positive constants  (Lyapunov exponents)  $0<\chi_1(\mu) \leq \chi_2(\mu)  $ such that for $m$ almost all $\i \in \Sigma$ we have
\[
\chi_1(\mu) \ =  \ - \lim_{n \to \infty} \frac{1}{n} \log \alpha_1(\i|_n)
\]
and
\[
\chi_2(\mu)  \ =  \  -  \lim_{n \to \infty} \frac{1}{n} \log \alpha_2(\i|_n).
\]
\end{prop}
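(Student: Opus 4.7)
The plan is to view this as a direct application of Kingman's subadditive ergodic theorem together with Birkhoff's ergodic theorem, using the multiplicative structure of singular values. Write $A_{\i|_n} = A_{i_1} \cdots A_{i_n}$ for the linear part of $S_{\i|_n}$.

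First I would handle $\chi_1$. The top singular value $\alpha_1$ equals the operator norm, which is submultiplicative: $\alpha_1(AB) \leq \alpha_1(A)\,\alpha_1(B)$. Hence the sequence $f_n(\i) := \log \alpha_1(\i|_n)$ satisfies
\[
f_{n+m}(\i) \ \leq \ f_n(\i) + f_m(\sigma^n \i),
\]
i.e.\ it is a subadditive cocycle over $(\Sigma,\sigma,m)$. Since each $A_i$ is a strict contraction, setting $c := \max_i \alpha_1(A_i) < 1$ gives $f_n(\i) \leq n \log c$, so $f_n$ is integrable and bounded above by a negative linear function. Kingman's subadditive ergodic theorem then yields a shift-invariant limit, which is constant by ergodicity of $m$; call its negative $\chi_1(\mu)$. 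The bound $f_n \leq n \log c$ forces $\chi_1(\mu) \geq -\log c > 0$.

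Next I would obtain $\chi_2$ via the determinant. For any $2 \times 2$ real matrix $\alpha_1(A)\alpha_2(A) = |\det A|$, and $\det$ is multiplicative, so
\[
\log \alpha_2(\i|_n) \ = \ \log|\det A_{\i|_n}| - \log \alpha_1(\i|_n) \ = \ \sum_{k=1}^n \log|\det A_{i_k}| - \log\alpha_1(\i|_n).
\]
The first sum is a Birkhoff sum of the integrable observable $\i \mapsto \log|\det A_{i_1}|$, so by Birkhoff's ergodic theorem $\frac{1}{n} \sum_{k=1}^n \log|\det A_{i_k}|$ converges $m$-a.e.\ to a constant, say $-\Delta(\mu)$. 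Combining with the previous step gives
\[
-\lim_{n\to\infty} \frac{1}{n}\log\alpha_2(\i|_n) \ = \ \Delta(\mu) - \chi_1(\mu) \ =: \ \chi_2(\mu)
\]
for $m$-a.e.\ $\i$. Positivity $\chi_2(\mu) > 0$ follows from $\alpha_2(\i|_n) \leq \alpha_1(\i|_n) \leq c^n$, which gives $\chi_2(\mu) \geq -\log c > 0$, and the ordering $\chi_1(\mu) \leq \chi_2(\mu)$ follows from the pointwise inequality $\alpha_1(\i|_n) \geq \alpha_2(\i|_n)$ by taking $-\frac{1}{n}\log$ and passing to the limit.

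There is no genuine obstacle here; the only mildly delicate point is that Kingman's theorem by itself does not directly give convergence for $\log \alpha_2$ (the map $A \mapsto \alpha_2(A)$ is supermultiplicative rather than submultiplicative), which is why the determinant splitting is used to reduce the second exponent to the first one plus a Birkhoff average. The whole argument is standard and does not use any feature of the specific diagonal/anti-diagonal structure.
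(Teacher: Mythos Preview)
Your argument is correct and is exactly the standard elaboration of what the paper asserts: the paper does not actually give a proof of this proposition, it simply states that it ``follows from the subadditive ergodic theorem'' and moves on. Your use of Kingman for $\alpha_1$ together with the determinant/Birkhoff trick for $\alpha_2$ is the usual way to make that sentence precise, so there is nothing to compare.
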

In fact in the setting of this paper we will be able to express these values as integrals of functions defined on a suitable chosen subshift of finite type.  It can be shown that if $m$ is a fully supported Bernoulli measure on $F$ in the setting of section \ref{class} (in particular the iterated function system is irreducible meaning that there is no one-dimensional linear subspace of $\mathbb{R}^2$ that is preserved by all of the matrices $A_i$) then the Lyapunov exponents will be equal \cite[Theorem 13]{morris2} and it will be straightforward to calculate the dimension. In Section 7 of \cite{MS}  they calculate the dimension of the attractor in several cases by looking at the dimension of $\sigma^n$-invariant Bernoulli measures without full support. Our aim will be to look at equilibrium states and show they are exact dimensional and that in several cases they give an ergodic measure of maximal dimension.

Another key ingredient will be entropy, this time provided by the Shannon-McMillan-Breiman theorem.

\begin{prop}[Shannon-McMillan-Breiman] \label{SMB}
There exists a non-negative constant  (the entropy)  $h(\mu) $ such that for $m$ almost all $\i \in \Sigma$ we have
\[
h(\mu) \ =  \  -  \lim_{n \to \infty} \frac{1}{n} \log m([\i|_n]).
\]
\end{prop}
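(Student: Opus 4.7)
The plan is to recognize this as the classical Shannon--McMillan--Breiman theorem applied to the ergodic measure-preserving system $(\Sigma, \sigma, m)$, and to carry out Breiman's martingale-theoretic argument. The constant $h(\mu)$ will appear as the integral of a conditional information function with respect to a tail $\sigma$-algebra.

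Let $\mathcal{P} = \{[i] : i \in \mathcal{I}\}$ be the partition of $\Sigma$ into length-one cylinders, set $\mathcal{P}^k = \bigvee_{j=0}^{k-1}\sigma^{-j}\mathcal{P}$, and let $\mathcal{B}$ denote the Borel $\sigma$-algebra on $\Sigma$. Define the conditional information functions
\[
g_k(\i) \ = \ -\log m\bigl([\i|_1]\,\big|\,\sigma^{-1}\mathcal{P}^k\bigr)(\i), \qquad g_\infty(\i) \ = \ -\log m\bigl([\i|_1]\,\big|\,\sigma^{-1}\mathcal{B}\bigr)(\i).
\]
Using shift-invariance of $m$ and the chain rule for conditional probabilities, a telescoping argument produces the key identity
\[
-\log m([\i|_n]) \ = \ \sum_{k=0}^{n-1} g_{n-k-1}(\sigma^k \i).
\]
Since the $\sigma$-algebras $\sigma^{-1}\mathcal{P}^k$ increase to $\sigma^{-1}\mathcal{B}$, Doob's martingale convergence theorem gives $g_k \to g_\infty$ both $m$-almost everywhere and in $L^1(m)$. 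Ergodicity of $m$ together with Birkhoff's ergodic theorem then yields
\[
\frac{1}{n}\sum_{k=0}^{n-1} g_\infty(\sigma^k \i) \ \longrightarrow \ \int g_\infty \, dm \ =: \ h(\mu) \ \geq \ 0
\]
for $m$-almost every $\i$.

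The main technical obstacle is that the telescoping sum above is \emph{not} a Birkhoff sum for a fixed function: the approximating index $n-k-1$ varies with $k$. One must therefore control the discrepancy $\frac{1}{n}\sum_{k=0}^{n-1}(g_{n-k-1} - g_\infty)(\sigma^k \i)$. The standard remedy is to show via Doob's maximal inequality that $g^\ast := \sup_k g_k$ lies in $L^1(m)$; finiteness of $\int g^\ast \, dm$ relies on the finiteness of the alphabet $\mathcal{I}$ (the Chung--Neveu lemma gives $\int g^\ast \, dm \leq 1 + \log |\mathcal{I}|$). With this $L^1$-domination in hand, a truncation argument combined with a second application of Birkhoff's theorem (to $\sup_{k \geq N}|g_k - g_\infty|$, then letting $N \to \infty$) shows the discrepancy tends to zero $m$-almost everywhere, yielding the claimed limit with $h(\mu) = \int g_\infty \, dm$.
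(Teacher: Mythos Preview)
Your argument is correct: this is exactly Breiman's martingale proof of the Shannon--McMillan--Breiman theorem, and all the ingredients you list (the telescoping identity via invariance, $L^1$-control of $g^\ast$ through the Chung--Neveu maximal inequality for a finite alphabet, and the truncation to handle the moving index) are the right ones. The paper, however, does not prove this proposition at all; it simply records it as the classical Shannon--McMillan--Breiman theorem and uses it as a black box, so there is no ``paper's own proof'' to compare against. Your write-up supplies what the paper intentionally omits.
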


\subsection{Description of equilibrium states}

For $s \in (0,2]$, let $\phi^s : \mathcal{I}^* \to \mathbb{R}^+$ be Falconer's submultiplicative singular value function, see \cite{affine}, defined by
\[
\phi^s(\textbf{i}) \ = \ \left\{ \begin{array}{cc}
\alpha_1(\textbf{i})^s & s \in (0,1) \\
\alpha_1(\textbf{i}) \, \alpha_2(\textbf{i})^{s-1} & s \in [1, 2]\\
\end{array} \right.
\]
This allows the subadditive pressure to be defined by
$$P(s)=\lim_{n\to\infty}\frac{1}{n}\log\left(\sum_{\textbf{i} \in \{1,\ldots,d\}^n}\phi^s(\textbf{i} )\right).$$
Assuming that $P(2) \leq 0$, we define the \emph{affinity dimension} to be the value $s^{\ast}$ for which $P(s^{\ast})=0$. It is shown in \cite{morris} that in our setting the affinity dimension may be calculated as the spectral radius of a certain matrix. In the current paper we will use that it can be found in terms of the usual additive pressure of a suitable potential on a suitable subshift of finite type.

It is easy to see that the potentials $\phi^{s}$  are submultiplicative. We can also see that our iterated function system is irreducible and thus by the work in \cite{FK} and \cite{FL} it follows  that there exists a unique ergodic Borel probability measure $m^s$ and a universal constants $C_{0} \geq 1$ such that for all $\textbf{i} \in \mathcal I^*$ we have
\begin{equation}\label{Gibbs}
C_0^{-1} e^{-P(s)|\textbf{i}|}\, \phi^{s}([\textbf{i}])  \  \leq \  m^s([\textbf{i}]) \  \leq \ C_0e^{-P(s)|\textbf{i}|} \, \phi^{s}([\textbf{i}]) 
\end{equation}
where $|\textbf{i}|$ denotes the length of the string.  (Throughout this paper we will use `universal constant' to mean a constant which is independent of quantities which vary in our arguments, such as $n$ and $\mathbf{i}$, although it may depend on parameters which are fixed in the statements of our results, such as the maps in our iterated function system or  the value of $s$.) This follows from Proposition 1.2 in \cite{FK} when $s\in (0,1]$. When $1< s\leq 2$ the proof of Proposition 1.2 in \cite{FK} can be easily adapted.  See the comment directly below Question 3.1 in \cite{FK} and also \cite{FL}.  
Fix $s<2$ and let $\mu=\mu^s$ be the ergodic measure on $F$ corresponding to $m=m^s$. We call $m$ (and $\mu$) a K\"aenm\"aki measure, following \cite{kaenmaki} where such measures were first considered in this context.

We also say $m$ (and $\mu$) are \emph{quasi-Bernoulli measures} if there exists a universal constant $C \geq 1$ such that for all $\textbf{i}, \textbf{j} \in \mathcal I^*$ we have
\[
\frac{1}{C} \, m([\textbf{i}]) \,  m([\textbf{j}]) \  \leq \  m([\textbf{i}\textbf{j}]) \  \leq \ C \,  m([\textbf{i}]) \,   m([\textbf{j}]).
\]
Note that all standard Gibbs measures are quasi-Bernoulli but in general, the K\"aenm\"aki measure is \emph{not} quasi-Bernoulli, but it \emph{is} always submultiplicative when \eqref{Gibbs} is satisfied, in that the right hand side of the above always holds.   In particular in our case it will not be supermultiplicative (since we have fixed $s<2$) because of the presence of both anti-diagonal and diagonal matrices. In particular, using a similar argument to \cite[Lemma 3.5]{MS} we can consider $i$ where $$A_i= 
\begin{bmatrix}
a_i & 0 \\
0 & b_i
\end{bmatrix}$$
and $a_i\neq b_i$ and $j$ where
$$A_j= 
\begin{bmatrix}
0 & a_j \\
b_j&0
\end{bmatrix}.$$
In this case we have that for $n\in\N$
$$A_i^n(A_j)A_i^n=\begin{bmatrix}
0&a_i^na_jb_i^n\\
b_i^nb_ja_i^n&0
\end{bmatrix}$$
and so for $0<s<2$ we have
$$\lim_{n\to\infty}\frac{\phi^s(A_i^n(A_j)A_i^n)}{\phi^s(A_i^nA_j)\phi^s(A_i^n)}=0$$
and so if $s<2$ the K\"{a}enm\"{a}ki measure cannot be supermultiplicative and, in particular, cannot be quasi-Bernoulli.  In particular, this shows that there does not exist a H\"{o}lder continuous potential whose Gibbs measure is the  K\"aenm\"aki measure since if there did, then it would be quasi-Bernoulli by definition.

Note that in \cite[Definition 2.6]{baranyrams} some sufficient conditions which ensure the K\"{a}enm\"{a}ki measure \emph{is} quasi-Bernoulli are given.  Also, in \cite[Proposition 7.3]{baranykaenmakikoivusalo} some sufficient conditions which ensure the K\"{a}enm\"{a}ki measure fails to be quasi-Bernoulli are given, albeit in a different setting.

Consider the sub-shift of finite type on $\{1, \dots , 2d\}^{\mathbb{N}}$ corresponding to the transition matrix $A$ given by
\[
A(i,j) \ = \ \left\{ \begin{array}{cc}
1 & i \in \{1,\dots, l-1\} \cup \{d+l, \dots ,2d\}   \textnormal{ and }  j \leq d \\
1& i \in \{l,\dots, d+l-1\} \textnormal{ and } j>d \\
0 & \textnormal{ otherwise.} \\
\end{array} \right.
\]
Denote this by $\Sigma_A$ and define $\tau: \Sigma \to \Sigma_A$ by $\tau( \mathbf{i}) = \tau( i_1 i_2 \dots) = (\tau_1(\mathbf{i}) \tau_2(\mathbf{i}) \dots)$ where $\tau_1(\mathbf{i})=i_1$ and
\[
\tau_m(\mathbf{i}) \ = \ \left\{ \begin{array}{cc}
i_m & card\{1 \leq j \leq m-1 :i_j \geq l\} \textnormal{ even}\\
i_m +d & card\{1 \leq j \leq m-1 :i_j \geq l\} \textnormal{ odd.}\\
\end{array} \right.
\]
The purpose of this associated subshift of finite type is to precisely record at which times the orientation is preserved.  More precisely, $\tau_m(\mathbf{i})$ is in the `first half' of the double system if and only if the linear part of the map $S_{\mathbf{i}|_{m-1}}$ is a diagonal matrix.  Note that $\tau$ is not a surjection (but it is an injection) and the image of $\tau$ is the subset of $\Sigma_A$ consisting of sequences where the first digit is at most $d$. It will be convenient to introduce $\omega:\Sigma\to\Sigma_A$ which is the projection to the complement of $\tau(\Sigma)$, let $\omega: \Sigma \to \Sigma_A$ by $\omega( \mathbf{i}) = \omega( i_1 i_2 \dots) = (\omega_1(\mathbf{i}) \omega_2(\mathbf{i}) \dots)$ where $\omega_1(\mathbf{i})=i_1+d$ and
\[
\omega_m(\mathbf{i}) \ = \ \left\{ \begin{array}{cc}
i_m+d & card\{1 \leq j \leq m-1 :i_j \geq l\} \textnormal{ even}\\
i_m & card\{1 \leq j \leq m-1 :i_j \geq l\} \textnormal{ odd.}\\
\end{array} \right.
\]
We then have that $\Sigma_A=\tau(\Sigma)\cup\omega(\Sigma)$ where the union is disjoint.

Also, the subshift $\Sigma_A$ is mixing. To see this, observe that $A^2$ has all positive entries since the matrix
$$B^2=\begin{pmatrix} 1& 1&0&0\\ 0&0&1&1 \\0&0&1&1 \\1&1&0&0 \end{pmatrix}^2$$
has all positive entries. Note that $B$ is just a collapsed version of $A$ where the first row corresponds to $1 \leq i \leq l-1$, the second to $l \leq i \leq d$, the third to $d+1 \leq i \leq d+l-1$ and the last to $d+l \leq i \leq 2d$ (and similarly for columns too). In particular, since $\Sigma_A$ is mixing we know about the existence of unique Gibbs measures for the potentials which we define next.

We define locally constant potentials $f_{1,s}, f_{2,s}: \Sigma_A \to \mathbb{R}$ by
\[
f_{1,s}(\mathbf{i}) \ = \ \left\{ \begin{array}{cc}
s\log a_{i_1} & i_1 \leq d\\
s\log b_{i_1-d} & i_1 \geq d+1\\
\end{array} \right.
\]
and
\[
f_{2,s}(\mathbf{i}) \ = \ \left\{ \begin{array}{cc}
s\log b_{i_1} & i_1 \leq d\\
s\log a_{i_1-d} & i_1 \geq d+1\\
\end{array} \right.
\]
when $s \in (0,1)$ and similarly
\[
f_{1,s}(\mathbf{i}) \ = \ \left\{ \begin{array}{cc}
\log a_{i_1} +(s-1)\log b_{i_1} & i_1 \leq d\\
\log b_{i_1-d} +(s-1)\log a_{i_1-d} & i_1 \geq d+1\\
\end{array} \right.
\]
and
\[
f_{2,s}(\mathbf{i}) \ = \ \left\{ \begin{array}{cc}
\log b_{i_1}  +(s-1)\log a_{i_1}& i_1 \leq d\\
\log a_{i_1-d} +(s-1)\log b_{i_1-d} & i_1 \geq d+1\\
\end{array} \right.
\]
when $s \in [1,2]$. 

 We will denote the Gibbs measures on $\Sigma_A$ for these potentials by $m_1$ and $m_2$ respectively.  That is, the unique invariant Borel probability measures such that for any $\mathbf{i} \in \Sigma_A$ and $n \in \mathbb{N}$,
$$m_1([\mathbf{i}|_n]) \approx \exp(S_n f_{1,s}(\mathbf{i})- nP_A(f_{1,s}))$$
and
$$m_2([\mathbf{i}|_n]) \approx \exp(S_n f_{2,s}(\mathbf{i})- nP_A(f_{2,s}))$$
where $a \approx b$ means that $C^{-1} a \leq b \leq Ca$ for some universal constant $C$ and $S_nf(\mathbf{i})$ denotes the Birkhoff sum $f(\mathbf{i}) + \ldots + f(\sigma^{n-1} \mathbf{i})$.  Here $P_A(f_{1,s})$ and $P_A(f_{2,s})$ denotes the topological pressure on $\Sigma_A$ of $f_{1,s}$ and $f_{2,s}$. By symmetry we have that $P_A(f_{1,s})= P_A(f_{2,s})$. We  define a measure $\nu$ on $\Sigma$ by $\nu(E)= m_1(\tau(E))+m_2(\tau(E))$.   Recall that $\tau$ is an injection and so this indeed defines a measure.  Moreover, note that $\nu$ is a probability measure since $\nu(\Sigma)= m_1(\tau(\Sigma))+m_2(\tau(\Sigma)) =m_1(\tau(\Sigma))+m_1(\omega(\Sigma)) = m_1(\Sigma_A) $.  It follows from the definition of $\tau$, $m_1$ and the invariance of $m_1$ and $m_2$ that $\nu$ will be invariant.  Note that we emphasise the dependence of $f_{1,s}$ and $f_{2,s}$ on $s$, so we can use $f_{1,1}$ and $f_{2,1}$ to express the Lyapunov exponents, but for simplicity of exposition we deliberately suppress this dependence when writing the measures $m_1$, $m_2$, $\nu$ and of course $m$ and $\mu$. The measure $\nu$ has a similar structure to the equilibrium state studied in Proposition 6 of \cite{morris2} where a system with only anti-diagonal matrices is studied. The equilibrium state studied there could be put into this context but the subshift of finite type would not be mixing due to the lack of diagonal matrices.

 The following lemmas will show that $\nu$ is the K\"{a}enm\"{a}ki measure for $\phi^s$ and that the measures $m_1$ and $m_2$ can be used to find the Lyapunov exponents.
\begin{lma}\label{size}
We have that
$$\int f_{1,1} \textup{d} m_1=\int f_{2,1}\textup{ d} m_2\geq\int f_{2,1}\textup{d}m_1=\int f_{1,1}\textup{d}m_2$$
\end{lma}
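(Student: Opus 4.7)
The plan is to exploit a natural involution on the subshift of finite type $\Sigma_A$ that swaps the two Gibbs measures and the two potentials, and then apply the variational principle for the remaining inequality.

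First, I would define $\pi:\Sigma_A\to\Sigma_A$ coordinate-wise by $\pi(i)=i+d$ for $i\leq d$ and $\pi(i)=i-d$ for $i\geq d+1$. A direct check using the block structure of $A$ shows that $\pi$ is an automorphism of $\Sigma_A$: it swaps the sets $\{1,\dots,l-1\}\cup\{d+l,\dots,2d\}$ and $\{l,\dots,d+l-1\}$, and simultaneously swaps $\{j\leq d\}$ with $\{j>d\}$, which together preserve the transition rule. Since $\pi$ is applied coordinatewise it commutes with $\sigma$. From the definitions of the potentials it is immediate that $f_{1,1}\circ\pi=f_{2,1}$ and $f_{2,1}\circ\pi=f_{1,1}$.

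Because $\pi$ is a homeomorphism conjugating $(\Sigma_A,\sigma)$ to itself and transforms the potential $f_{1,1}$ into $f_{2,1}$, the pushforward $\pi_* m_1$ satisfies the Gibbs property for $f_{2,1}$ with pressure $P_A(f_{1,1})$. In particular this gives $P_A(f_{1,1})=P_A(f_{2,1})$, and by uniqueness of the Gibbs measure on the mixing subshift $\Sigma_A$ we conclude $\pi_* m_1=m_2$. A change of variables then yields the two equalities in the lemma:
\[
\int f_{2,1}\,dm_2=\int f_{2,1}\circ\pi\,dm_1=\int f_{1,1}\,dm_1,\qquad \int f_{1,1}\,dm_2=\int f_{1,1}\circ\pi\,dm_1=\int f_{2,1}\,dm_1.
\]

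For the middle inequality I would use the variational principle. Since $m_1$ is the unique equilibrium state of $f_{1,1}$ on the mixing subshift $\Sigma_A$,
\[
h(m_1)+\int f_{1,1}\,dm_1=P_A(f_{1,1})=P_A(f_{2,1})\geq h(m_1)+\int f_{2,1}\,dm_1,
\]
where the final inequality is the variational bound applied to $m_1$ for the potential $f_{2,1}$. Cancelling $h(m_1)$ produces $\int f_{1,1}\,dm_1\geq \int f_{2,1}\,dm_1$, which chains with the two equalities to give the full statement. There is no serious obstacle; the only step that requires care is the bookkeeping needed to verify that $\pi$ really is an automorphism of $\Sigma_A$, after which the result is a clean combination of symmetry (for the equalities) and the variational principle (for the inequality).
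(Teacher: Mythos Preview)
Your treatment of the equalities is correct and is a clean explicit version of what the paper calls ``symmetry'': the involution $\pi$ you define is exactly the map taking $\tau(\mathbf{i})$ to $\omega(\mathbf{i})$, and the verification that it preserves the transition rule and swaps the potentials is routine.

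There is, however, a genuine gap in your inequality argument. You assert that $m_1$ is the equilibrium state of $f_{1,1}$, but in the paper $m_1$ is defined as the Gibbs measure for the potential $f_{1,s}$ with $s\in(0,2)$ fixed; unless $s=1$ these do not coincide. The variational principle therefore only gives
\[
h(m_1)+\int f_{1,s}\,dm_1=P_A(f_{1,s})=P_A(f_{2,s})\ge h(m_1)+\int f_{2,s}\,dm_1,
\]
i.e.\ $\int f_{1,s}\,dm_1\ge\int f_{2,s}\,dm_1$. To recover the statement about $f_{1,1}$ and $f_{2,1}$ you need one further observation: for $s\in(0,1)$ one has $f_{j,s}=s\,f_{j,1}$, while for $s\in[1,2)$ one has $f_{1,s}=f_{1,1}+(s-1)f_{2,1}$ and $f_{2,s}=f_{2,1}+(s-1)f_{1,1}$, so that in either case
\[
\int f_{1,s}\,dm_1-\int f_{2,s}\,dm_1=c(s)\Bigl(\int f_{1,1}\,dm_1-\int f_{2,1}\,dm_1\Bigr)
\]
with $c(s)>0$ (namely $c(s)=s$ or $c(s)=2-s$). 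With this correction your argument is complete.

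Once patched, your route to the inequality is genuinely different from the paper's. The paper argues by contradiction: assuming $\int f_{1,1}\,dm_1<\int f_{2,1}\,dm_1$, it deduces (via the Gibbs property and equality of pressures) that $m_2([\mathbf{i}|_n])/m_1([\mathbf{i}|_n])\to\infty$ for $m_1$-a.e.\ $\mathbf{i}$, which is impossible by a density theorem. Your variational-principle argument is shorter and more direct, and avoids the appeal to measure differentiation; the paper's argument, on the other hand, stays closer to the Gibbs inequalities already in play and does not require invoking entropy at this stage.
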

\begin{proof}
The fact that $\int f_{1,1} \textup{d}m_1=\int f_{2,1}\textup dm_2$ and $\int f_{2,1}\textup{d}m_1=\int f_{1,1}\textup{d}m_2$ follows by symmetry (note that for $\mathbf{i}\in\Sigma$ and $s\in (0,2)$ we have that $f_{1,s}(\tau(\mathbf{i}))=f_{2,s}(\omega(\mathbf{i}))$ and $f_{2,s}(\tau(\mathbf{i}))=f_{1,s}(\omega(\mathbf{i}))$). 

For $m_1$ almost all $\mathbf{i}\in\Sigma_A$ we have that
$$\lim_{n\to\infty}\frac{S_nf_{1,1}(\mathbf{i})}{n}=\int f_{1,1} \textup{d}m_1=\int f_{2,1}\textup{d}m_2$$
and
$$\lim_{n\to\infty}\frac{S_nf_{2,1}(\mathbf{i})}{n}=\int f_{2,1} \textup{d}m_1=\int f_{1,1}\textup{d}m_2.$$
Thus if $\int f_{1,1} \textup{d}m_1<\int f_{2,1} \textup{d}m_1$ then for any $s\in (0,2)$
$\int f_{1,s} \textup{d}m_1<\int f_{2,s} \textup{d}m_1$. This means that using the Gibbs property and the fact that $P_A(f_{1,s})=P_A(f_{2,s})$ we have
$$\lim_{n\to\infty}\frac{m_2([i_1,\ldots,i_n])}{m_1([i_1,\ldots,i_n])} = \infty$$
which certainly cannot hold for $m_1$ almost all $\mathbf{i}$, see for example \cite[Theorem 2.12(1)]{mabook}, and so is a contradiction.
\end{proof}
\begin{lma}\label{key}
For all $\mathbf{i} \in \Sigma$ and $n \in \mathbb{N}$ we have that
$$\phi^s(\mathbf{i}|_n)= \max\{\exp S_n f_{1,s}(\tau(\mathbf{i})), \exp S_n f_{2,s}(\tau(\mathbf{i}))\}.$$
\end{lma}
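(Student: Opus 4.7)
The plan is to reduce the statement to a direct calculation by first computing the matrix product $M_n := A_{i_1}\cdots A_{i_n}$ explicitly and then expanding the two Birkhoff sums term by term using the definition of $\tau$.

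First I would show by induction on $n$ that $M_n$ is either diagonal or anti-diagonal, with the type determined by the parity of $N_n(\mathbf{i}) := \mathrm{card}\{1 \le j \le n : i_j \ge l\}$, and that one can write
\[
M_n \ = \ \begin{pmatrix} A_n & 0 \\ 0 & B_n \end{pmatrix} \quad \text{or} \quad \begin{pmatrix} 0 & A_n \\ B_n & 0 \end{pmatrix}
\]
where
\[
A_n \ = \ \prod_{k=1}^n c_k, \qquad B_n \ = \ \prod_{k=1}^n c_k',
\]
with $c_k = a_{i_k}$ and $c_k' = b_{i_k}$ whenever $N_{k-1}(\mathbf{i})$ is even, and $c_k = b_{i_k}$, $c_k' = a_{i_k}$ otherwise. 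The induction step amounts to the four elementary rules for multiplying a diagonal/anti-diagonal matrix on the right by a diagonal or anti-diagonal matrix; the key point is that each anti-diagonal factor flips the parity of $M$ and also swaps which of $a_{i_k}, b_{i_k}$ gets multiplied into $A_n$ versus $B_n$ for all subsequent indices.

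Next, since $M_n M_n^T$ is diagonal with entries $A_n^2$ and $B_n^2$ in either case, the singular values of $M_n$ are exactly $A_n$ and $B_n$ (all entries being non-negative). Hence $\alpha_1(\mathbf{i}|_n) = \max(A_n, B_n)$ and $\alpha_2(\mathbf{i}|_n) = \min(A_n, B_n)$, and therefore
\[
\phi^s(\mathbf{i}|_n) \ = \ \begin{cases} \max(A_n,B_n)^s & s \in (0,1), \\ \max(A_n,B_n)\cdot \min(A_n,B_n)^{s-1} & s \in [1,2]. \end{cases}
\]

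Then I would compare with the Birkhoff sums. By the definition of $\tau$, $\tau_k(\mathbf{i}) \le d$ iff $N_{k-1}(\mathbf{i})$ is even, and in that case $\tau_k(\mathbf{i}) = i_k$, while otherwise $\tau_k(\mathbf{i}) = i_k + d$. Applying the definition of $f_{1,s}, f_{2,s}$ term by term yields, for $s\in(0,1)$,
\[
\exp S_n f_{1,s}(\tau(\mathbf{i})) \ = \ A_n^s, \qquad \exp S_n f_{2,s}(\tau(\mathbf{i})) \ = \ B_n^s,
\]
and for $s \in [1,2]$,
\[
\exp S_n f_{1,s}(\tau(\mathbf{i})) \ = \ A_n\, B_n^{s-1}, \qquad \exp S_n f_{2,s}(\tau(\mathbf{i})) \ = \ B_n\, A_n^{s-1}.
\]
In the $s\in(0,1)$ case the equality with $\phi^s(\mathbf{i}|_n)$ is immediate. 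In the $s\in[1,2]$ case, assuming without loss of generality $A_n \ge B_n$, one has $A_n^{2-s} \ge B_n^{2-s}$ (since $2-s \ge 0$), hence $A_n B_n^{s-1} \ge B_n A_n^{s-1}$, so the maximum is $A_n B_n^{s-1} = \max(A_n,B_n)\min(A_n,B_n)^{s-1}=\phi^s(\mathbf{i}|_n)$, as required.

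The whole proof is a direct calculation; the only real obstacle is the bookkeeping needed to verify that the sequence of choices between $a_{i_k}$ and $b_{i_k}$ prescribed by the matrix multiplication rules coincides exactly with the sequence of labels $\tau_k(\mathbf{i}) \lessgtr d$ that governs $f_{1,s}, f_{2,s}$ along the orbit.
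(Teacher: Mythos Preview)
Your proof is correct and follows essentially the same approach as the paper's: both argue directly from the definitions that $\exp S_n f_{1,s}(\tau(\mathbf{i}))$ and $\exp S_n f_{2,s}(\tau(\mathbf{i}))$ encode the horizontal and vertical side lengths of $S_{\mathbf{i}|_n}([0,1]^2)$ (respectively their appropriate combinations when $s\in[1,2]$), so that taking the maximum recovers $\phi^s$. The paper simply states this geometric interpretation and leaves the verification implicit, whereas you spell out the induction on the matrix product and the term-by-term matching with $\tau_k(\mathbf{i})$; your version is more detailed but not genuinely different.
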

\begin{proof}
This follows immediately from the definitions. In particular if $0<s \leq 1$ then $\exp S_n f_{1,s}(\tau(\mathbf{i}))$ corresponds to the length of the horizontal side of the rectangle $S_{\mathbf{i}|_n}([0,1]^2)$, raised to the power $s$ while $\exp S_n f_{2,s}(\tau(\mathbf{i}))$ corresponds to the length of the vertical side of the rectangle $S_{\mathbf{i}|_n}([0,1]^2)$, raised to the power $s$. Thus it is easy to see that $\phi^s(\mathbf{i}|_n)$ simply corresponds to the maximum of these two lengths. For $1 < s < 2$ analogous statements hold and the claim can be seen to also hold.
\end{proof}

\begin{cor} \label{equivalence}
There exists $C>0$ such that for all $\mathbf{i} \in \Sigma$ and $n \in \mathbb{N}$
\begin{eqnarray*}
C^{-1} \leq \frac{\nu([\mathbf{i}|_n])}{\phi^s(\mathbf{i}|_n) \exp(-nP_A(f_{1,s}))} \leq C
\end{eqnarray*}
In particular, $\nu=m$ is the unique K\"{a}enm\"{a}ki measure for $\phi^s$ and $P(s)=P_A(f_{1,s})=P_A(f_{2,s})$.
\end{cor}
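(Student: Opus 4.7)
The plan is to compute $\nu([\mathbf{i}|_n])$ in closed form using the definition $\nu(E) = m_1(\tau(E)) + m_2(\tau(E))$, then match it against Lemma \ref{key} together with the Gibbs estimates for $m_1$ and $m_2$.

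The first step is a cylinder identity: $\tau([\mathbf{i}|_n]) = [\tau(\mathbf{i})|_n]$. The inclusion $\subseteq$ is immediate since $\tau_j(\mathbf{i})$ depends only on $i_1,\dots,i_j$. The reverse inclusion uses injectivity of $\tau$: given $\tau(\mathbf{i})|_n$, one reconstructs $\mathbf{i}|_n$ inductively because at step $m$ the parity of $\mathrm{card}\{j<m : i_j \geq l\}$ is determined by the already recovered $\mathbf{i}|_{m-1}$, which tells us whether $\tau_m$ was shifted by $d$. Also $[\tau(\mathbf{i})|_n] \subset \tau(\Sigma)$ automatically since $\tau_1(\mathbf{i}) = i_1 \leq d$, and membership in $\tau(\Sigma)$ is controlled entirely by the first coordinate.

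Consequently $\nu([\mathbf{i}|_n]) = m_1([\tau(\mathbf{i})|_n]) + m_2([\tau(\mathbf{i})|_n])$. Applying the Gibbs property of $m_1$ and $m_2$, and using the already noted equality $P_A(f_{1,s}) = P_A(f_{2,s})$, we can pull out a common factor $\exp(-nP_A(f_{1,s}))$ and are left with $\exp S_n f_{1,s}(\tau(\mathbf{i})) + \exp S_n f_{2,s}(\tau(\mathbf{i}))$. This quantity lies between the maximum and twice the maximum of its two summands, and by Lemma \ref{key} that maximum equals $\phi^s(\mathbf{i}|_n)$. This delivers the two-sided estimate with a universal constant absorbing both the Gibbs multiplicative constants and the factor 2.

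To conclude, the estimate shows $\nu$ satisfies the defining Gibbs inequality \eqref{Gibbs} for $\phi^s$, so the uniqueness of the K\"aenm\"aki measure from \cite{FK, FL} forces $\nu = m$. Summing the estimate over $\mathbf{i} \in \mathcal{I}^n$ and using $\nu(\Sigma) = 1$ yields
$$\sum_{\mathbf{i} \in \mathcal{I}^n} \phi^s(\mathbf{i}) \asymp \exp(n P_A(f_{1,s})),$$
and passing to the normalized logarithmic limit gives $P(s) = P_A(f_{1,s}) = P_A(f_{2,s})$. I do not anticipate a serious obstacle: the only mildly delicate point is the verification of the cylinder identity for $\tau$, and once that is in hand the rest is a direct assembly of Lemma \ref{key}, the Gibbs property, and Lemma \ref{size}.
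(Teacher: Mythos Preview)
Your argument is correct and is essentially the paper's own proof with the implicit step made explicit: the paper's one-line ``It follows from the Gibbs property that $\nu([\mathbf{i}|_n]) \approx \exp(-nP_A(f_{1,s}))\max\{\cdots\}$'' is exactly your cylinder identity $\tau([\mathbf{i}|_n])=[\tau(\mathbf{i})|_n]$ combined with the Gibbs estimates for $m_1,m_2$ and Lemma~\ref{key}. The only cosmetic differences are that the paper deduces $P(s)=P_A(f_{1,s})$ by comparing $\nu$ directly with $m$ rather than by summing over cylinders, and that Lemma~\ref{size} is not actually needed here (the equality $P_A(f_{1,s})=P_A(f_{2,s})$ was already noted by symmetry before the corollary).
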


\begin{proof}
It follows from the Gibbs property that 
$$\nu([\mathbf{i}|_n]) \approx \exp(-nP_A(f_{1,s})) \max \{\exp(S_nf_{1,s}(\tau(\mathbf{i}))), \exp(S_nf_{2,s}(\tau(\mathbf{i})))\}$$
and the first result now follows using Lemma \ref{key}.  Finally, by combining this and the Gibbs property for the K\"{a}enm\"{a}ki measure $m$ we get
\[
\frac{\nu([\mathbf{i}|_n])}{m([\mathbf{i}|_n]) } \approx \exp(n(P(s)-P_A(f_{1,s})))
\]
and since both $\nu$ and $m$ are probability measures we may conclude that $P_A(f_{1,s})=P(s)$. Finally since $\nu$ and $m$ are both invariant, it follows by the uniqueness of property \eqref{Gibbs} that $\nu=m$.
\end{proof}
We can also relate the Lyapunov exponents of $\mu$ (note that we now have that $\mu=\Pi\nu$ as $\nu=m$ )  to $m_1$ and $m_2$.
\begin{cor}\label{Lyap}
We have that
$$\chi_1(\mu)=-\int f_{1,1}\textup{d}m_1<\chi_2(\mu)=-\int f_{2,1}\textup{d}m_1$$
and for $\nu$-almost all $\mathbf{i}\in\Sigma$ we have that
$$\lim_{n\to\infty}\left( \frac{\alpha_2(\mathbf{i}|_n)}{\alpha_1(\mathbf{i}|_n)}\right)=0.$$
\end{cor}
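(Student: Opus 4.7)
The plan is to use Lemma \ref{key} at $s=1$ to express both singular values as Birkhoff sums of $f_{1,1}$ and $f_{2,1}$ evaluated along $\tau(\mathbf{i})$, then apply the Birkhoff ergodic theorem for $m_1$ and $m_2$, exploiting the symmetries proved in Lemma \ref{size}.

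First, Lemma \ref{key} at $s=1$ directly gives $\log \alpha_1(\mathbf{i}|_n) = \max\{S_n f_{1,1}(\tau(\mathbf{i})),\, S_n f_{2,1}(\tau(\mathbf{i}))\}$. Since every composition of diagonal and anti-diagonal matrices is again diagonal or anti-diagonal, we have $\alpha_1(\mathbf{i}|_n)\alpha_2(\mathbf{i}|_n) = |\det A_{\mathbf{i}|_n}|$, and a direct check (handling the two cases via the definition of $\tau$) shows that this equals $\exp(S_n f_{1,1}(\tau(\mathbf{i})) + S_n f_{2,1}(\tau(\mathbf{i})))$. Subtracting the previous identity yields $\log \alpha_2(\mathbf{i}|_n) = \min\{S_n f_{1,1}(\tau(\mathbf{i})),\, S_n f_{2,1}(\tau(\mathbf{i}))\}$.

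Next, since $\nu(E) = m_1(\tau(E)) + m_2(\tau(E))$, a $\nu$-null set in $\Sigma$ is precisely the preimage under $\tau$ of a subset of $\tau(\Sigma)$ that is null for both $m_1$ and $m_2$. Hence for $\nu$-a.e. $\mathbf{i}$, the point $\tau(\mathbf{i})$ is Birkhoff-typical for $m_1$ or for $m_2$. If typical for $m_1$, the Birkhoff limits along $\tau(\mathbf{i})$ are $\int f_{k,1}\,\textup{d}m_1$ for $k=1,2$; if typical for $m_2$, they are $\int f_{k,1}\,\textup{d}m_2$, which by Lemma \ref{size} is the same unordered pair $\{\int f_{1,1}\,\textup{d}m_1,\int f_{2,1}\,\textup{d}m_1\}$. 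The Lemma \ref{size} ordering $\int f_{1,1}\,\textup{d}m_1 \geq \int f_{2,1}\,\textup{d}m_1$ then identifies the maximum as $\int f_{1,1}\,\textup{d}m_1$ and the minimum as $\int f_{2,1}\,\textup{d}m_1$. Combined with the expressions above and Proposition \ref{oseledets}, this gives $\chi_1(\mu) = -\int f_{1,1}\,\textup{d}m_1$ and $\chi_2(\mu) = -\int f_{2,1}\,\textup{d}m_1$.

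The strict inequality $\chi_1(\mu) < \chi_2(\mu)$, which Lemma \ref{size} on its own does not supply, is where I expect the main obstacle to lie. I would argue by contradiction: if the two integrals coincided, then using $P_A(f_{1,1})=P_A(f_{2,1})$ and the symmetry-derived equality $h(m_1)=h(m_2)$, the variational principle forces $m_2$ to also be an equilibrium state for $f_{1,1}$, so $m_1=m_2$ by uniqueness; a Livsic-type argument combined with the fact that $f_{1,1}$ and $f_{2,1}$ are locally constant on the mixing SFT $\Sigma_A$ would then force $a_i = b_i$ for every $i$, contradicting the standing hypothesis. Once the strict inequality is in hand, the ratio statement follows at once: Step 1 gives
$$\log \frac{\alpha_2(\mathbf{i}|_n)}{\alpha_1(\mathbf{i}|_n)} \ = \ -\bigl|S_n f_{1,1}(\tau(\mathbf{i})) - S_n f_{2,1}(\tau(\mathbf{i}))\bigr|,$$
and Birkhoff shows that $\frac{1}{n}$ times the right-hand side converges to $-(\chi_2(\mu)-\chi_1(\mu)) < 0$, so the right-hand side itself diverges to $-\infty$ and the ratio tends to $0$.
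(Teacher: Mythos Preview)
Your identification of the two Lyapunov exponents via Birkhoff sums of $f_{1,1},f_{2,1}$ on $\Sigma_A$, together with the max/min expression for the singular values and the final ratio argument, is correct and essentially the same as the paper's proof. The gap is in the argument for strictness. You apply the variational principle for the potential $f_{1,1}$, but $m_1$ and $m_2$ are by definition the Gibbs/equilibrium measures for $f_{1,s}$ and $f_{2,s}$, with $s$ the fixed parameter of the K\"aenm\"aki measure under discussion --- not for $f_{1,1}$ and $f_{2,1}$. Consequently $h(m_1)+\int f_{1,1}\,dm_1$ has no reason to equal $P_A(f_{1,1})$, and showing that $m_2$ gives the same value for $h(\cdot)+\int f_{1,1}\,d(\cdot)$ as $m_1$ does not make $m_2$ an equilibrium state for anything, let alone force $m_1=m_2$. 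The repair is one line: since $f_{1,s}-f_{2,s}$ equals $s(f_{1,1}-f_{2,1})$ for $0<s<1$ and $(2-s)(f_{1,1}-f_{2,1})$ for $1\le s<2$, the hypothesis $\int f_{1,1}\,dm_1=\int f_{2,1}\,dm_1$ yields $\int f_{1,s}\,dm_1=\int f_{2,s}\,dm_1$; then $h(m_1)+\int f_{2,s}\,dm_1=P_A(f_{1,s})=P_A(f_{2,s})$, so $m_1$ is an equilibrium state for $f_{2,s}$ and uniqueness gives $m_1=m_2$. The Livsic step now concerns $f_{1,s}$ and $f_{2,s}$: at the fixed point $i^\infty\in\Sigma_A$ for any $i<l$ one reads off $a_i=b_i$ (using $s<2$), contradicting the standing assumption. (Your claim that cohomology forces $a_i=b_i$ for \emph{every} $i$ is too strong --- for anti-diagonal indices the natural periodic orbits impose only equality of the ratios $a_i/b_i$ --- but the diagonal case already suffices.)

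For comparison, the paper argues strictness by a different route that stays on $\Sigma$: if $\chi_1(\mu)=\chi_2(\mu)$ then the K\"aenm\"aki measure $m$ would be an equilibrium state for the additive, locally constant potential $\mathbf{i}\mapsto\tfrac{s}{2}\log\lvert\det A_{i_1}\rvert$, hence a standard Gibbs measure and in particular quasi-Bernoulli --- contradicting the explicit computation earlier in the paper showing that $m$ is never supermultiplicative when $s<2$.
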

\begin{proof}
We know that all $\mathbf{i} \in \Sigma$ satisfy
$$\alpha_1(\i|_n)=\max\{S_nf_{1,1}(\tau(\mathbf{i})),S_nf_{2,1}(\tau(\mathbf{i}))\}$$
and
$$\alpha_2(\i|_n)=\min\{S_nf_{1,1}(\tau(\mathbf{i})),S_nf_{2,1}(\tau(\mathbf{i}))\}.$$
Since we know by Lemma \ref{size} that $\int f_{1,1}\textup{d}m_1\geq\int f_{2,1}\textup{d}m_1$, and since $m_1\circ\tau \ll m$, it follows that for $m_1\circ\tau$ almost all $\mathbf{i} \in \Sigma$,
 $$\chi_1(\mu)=- \lim_{n \to \infty} \frac{1}{n} \log \alpha_1(\i|_n)   \ = \  -\int f_{1,1}\textup{d}m_1$$
 and
 \[
\chi_2(\mu)=-  \lim_{n \to \infty} \frac{1}{n} \log \alpha_2(\i|_n)    \ =  \   - \int f_{2,1}\textup{d}m_1.
\]
Similarly it can be shown that for $m_2\circ\tau$ almost all $\mathbf{i} \in \Sigma$,
 $$\chi_1(\mu)=- \lim_{n \to \infty} \frac{1}{n} \log \alpha_1(\i|_n)   \ = \  -\int f_{2,1}\textup{d}m_2$$
 and
 \[
\chi_2(\mu)=-  \lim_{n \to \infty} \frac{1}{n} \log \alpha_2(\i|_n)    \ =  \   - \int f_{1,1}\textup{d}m_2.
\]
To see why $\chi_1(\mu) < \chi_2(\mu)$, observe that if we had equality, then $m$ would be an equilibrium state for the additive potential $\mathbf{i} \mapsto \frac{s}{2} \log \text{Det}(A_{\mathbf{i}})$.  This would mean $m$ was quasi-Bernoulli and we have already observed that in our setting this is not the case.   Moreover, $\chi_1(\mu)< \chi_2(\mu)$ implies that for $m$-almost every $\mathbf{i} \in \Sigma$,
$$\lim_{n \to \infty} \log \left( \frac{\alpha_2(\mathbf{i}|_n)}{\alpha_1(\mathbf{i}|_n)}\right)^{\frac{1}{n}} <0$$
Thus, there exists $0<a<1$, $N \in \mathbb{N}$ such that for $n \geq N$,
$$\left( \frac{\alpha_2(\mathbf{i}|_n)}{\alpha_1(\mathbf{i}|_n)}\right)^{\frac{1}{n}} < a$$
In particular, for all $n \geq N$, $\frac{\alpha_2(\mathbf{i}|_n)}{\alpha_1(\mathbf{i}|_n)} \leq a^n$, and the conclusion follows.
\end{proof}

We can also conclude that in this case (again assuming that $P(2) \leq 0$) the affinity dimension is the value $s^{\ast}$ for which $P(s^{\ast})=0$, for details how to compute this value, see \cite{morris}.

The final piece of notation we introduce is $\mu_t= m_t \circ \tau \circ \Pi^{-1}$, the pushforward measure of $m_t \circ \tau$ onto $F$. We also need to consider the graph directed self-similar systems which correspond to the projections. We define  maps $g_i : [0,1] \to [0,1]$ by $g_i(x)=a_ix+\tau_i^{(1)}$ for $1\leq i\leq d$ and $g_i(x)=b_{i-d}x+\tau_{i-d}^{(2)}$ if $d+1\leq i\leq 2d$ and the standard natural projection $\overline{\Pi} : \Sigma_A \to\R$. We then have that for $\mathbf{i}\in\Sigma$
$$\pi_1(\Pi(\mathbf{i}))=\overline{\Pi}(\tau(\mathbf{i}))$$
and
$$\pi_2(\Pi(\mathbf{i}))=\overline{\Pi}(\omega(\mathbf{i})).$$
This allows us to deduce the following result.

\begin{prop} \label{projections2}
All the measures $\pi_1(\mu_1)$, $\pi_1(\mu_2)$, $\pi_2(\mu_1)$ and $\pi_2(\mu_2)$   are exact dimensional and we have that $\dim\pi_1(\mu_1)=\dim\pi_2(\mu_2)$ and $\dim\pi_2(\mu_1)=\dim\pi_1(\mu_2)$.  
\end{prop}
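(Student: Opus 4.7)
The strategy is to realise each of the four projected measures as a pushforward under $\bar\Pi$ of a Gibbs measure restricted to a cylinder, and then to extract a dimension formula that is manifestly insensitive to which cylinder is chosen.

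First I would establish the identifications
\[
\pi_1(\mu_t)=\bar\Pi_{*}\bigl(m_{t}|_{\tau(\Sigma)}\bigr), \qquad \pi_2(\mu_t)=\bar\Pi_{*}\bigl(m_{3-t}|_{\omega(\Sigma)}\bigr).
\]
These follow from $\pi_1\circ\Pi=\bar\Pi\circ\tau$ and $\pi_2\circ\Pi=\bar\Pi\circ\omega$ together with the observation $\omega=\Phi\circ\tau$, where $\Phi:\Sigma_A\to\Sigma_A$ is the coordinate-wise involution $i\mapsto i+d\pmod{2d}$. For the second identity one uses only that $\Phi$ commutes with $\sigma$, interchanges $\tau(\Sigma)$ and $\omega(\Sigma)$, and satisfies $f_{j,s}\circ\Phi=f_{3-j,s}$, which forces $\Phi_{*}m_{1}=m_{2}$ and $\Phi_{*}m_{2}=m_{1}$.

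To handle the fact that $m_t|_{\tau(\Sigma)}$ is not $\sigma$-invariant, I would pass to the first-return dynamics of $\sigma$ on $\tau(\Sigma)$. Writing $p=m_t(\tau(\Sigma))$, the normalised measure $\tilde m_t=m_t|_{\tau(\Sigma)}/p$ is the unique invariant probability measure of the first-return map $\tilde\sigma$; it is itself Gibbs for the induced mixing countable-state Markov shift, and the composition $G_{\mathbf j}=g_{j_1}\circ\cdots\circ g_{j_{r(\mathbf j)}}$ along excursions realises $\bar\Pi_{*}(m_{t}|_{\tau(\Sigma)})/p$ as the natural projection of $\tilde m_t$ from a countable-branch self-similar IFS on $X_{1}=\pi_{1}(F)$. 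Exact dimensionality of this pushforward follows from the overlapping self-similar measure theory of Feng--Hu (or, alternatively, from the forthcoming Feng preprint on projections of ergodic measures onto self-affine sets cited in the introduction). Combining this with Abramov's formula $h(\tilde m_t)=h(m_t)/p$ and with the Kac identity $\int\log|G'_{\mathbf j}|\,\textup{d}\tilde m_t=\bigl(\int f_{1,1}\,\textup{d}m_t\bigr)/p$ yields
\[
\dim\bar\Pi_{*}(m_{t}|_{\tau(\Sigma)}) \;=\; \frac{h(m_t)}{-\int f_{1,1}\,\textup{d}m_t},
\]
and the identical argument on $\omega(\Sigma)$ gives the same value because the normalising constant $p$ is replaced by $1-p$ and cancels from the $h/\chi$ ratio.

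Finally, Corollary \ref{Lyap} identifies $-\int f_{1,1}\,\textup{d}m_{1}=\chi_{1}(\mu)$ and Lemma \ref{size} identifies $-\int f_{1,1}\,\textup{d}m_{2}=-\int f_{2,1}\,\textup{d}m_{1}=\chi_{2}(\mu)$, so the previous display produces $\dim\pi_{1}(\mu_{1})=\dim\pi_{2}(\mu_{2})=h(m_{1})/\chi_{1}(\mu)$ and $\dim\pi_{1}(\mu_{2})=\dim\pi_{2}(\mu_{1})=h(m_{2})/\chi_{2}(\mu)$, which are the two claimed equalities. The main obstacle I expect is the induction step: the induced IFS is countably infinite with potentially heavy-tailed return times, and no open set condition is available since the branches arise from projecting a planar set, so the $h/\chi$ formula must be harvested from overlapping-measure theory rather than from the classical Mauldin--Williams framework.
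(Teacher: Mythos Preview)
Your identifications $\pi_1(\mu_t)=\bar\Pi_*(m_t|_{\tau(\Sigma)})$ and $\pi_2(\mu_t)=\bar\Pi_*(m_{3-t}|_{\omega(\Sigma)})$ are correct and are exactly what the paper uses. The gap is in what you do next. You pass to the first-return map on $\tau(\Sigma)$, obtain a \emph{countable} self-similar IFS, invoke Feng--Hu for exact dimensionality, and then assert the formula $\dim\bar\Pi_*(m_t|_{\tau(\Sigma)})=h(m_t)/(-\int f_{1,1}\,\textup{d}m_t)$ via Abramov and Kac. That last step is not justified: Feng--Hu gives exact dimensionality with dimension equal to \emph{projection entropy} divided by the Lyapunov exponent, not the full entropy $h$; in an overlapping system there can be a strict dimension drop, and nothing in your argument rules this out. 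Your derivation of the two equalities rests entirely on this unproved $h/\chi$ formula, so as written the proof does not close. (There is also the secondary issue that Feng--Hu is stated for finite IFSs, so the countable induced system needs extra work you flag but do not supply.)

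The paper sidesteps all of this. Rather than inducing, it observes that $m_t$, viewed as a $\sigma$-invariant ergodic measure on the \emph{full} shift $\Sigma_{2d}\supset\Sigma_A$, pushes forward under $\bar\Pi$ to an exact-dimensional measure by a direct application of Feng--Hu to the \emph{finite} self-similar IFS $\{g_i\}_{i=1}^{2d}$. No induction, no countable system, no $h/\chi$ formula. The equalities then drop out from your own identifications by absolute continuity: $\pi_1(\mu_1)$ and $\pi_2(\mu_2)$ are both absolutely continuous with respect to $\bar\Pi_*m_1$, hence share its exact dimension, and likewise $\pi_1(\mu_2)$ and $\pi_2(\mu_1)$ are absolutely continuous with respect to $\bar\Pi_*m_2$. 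You already had these ingredients; the fix is to use them directly rather than routing through an explicit dimension value that the proposition does not ask for and that you cannot establish without separation assumptions on the projected system.
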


\begin{proof} 
Note that for any $\mathbf{i}\in\Sigma$ we have that $f_{1,s}(\tau(\mathbf{i}))=f_{2,s}(\omega(\mathbf{i}))$.

We claim that the measures $\overline{\Pi}(m_1)$ and $\overline{\Pi}(m_2)$ are exact dimensional. To see this, observe that these measures are supported on the set
\begin{eqnarray}
\bigcup_{\i \in \Sigma_A} \bigcap_{n=1}^{\infty} g_{\i|_n}([0,1]) \subset \bigcup_{\i \in \Sigma_{2d}} \bigcap_{n=1}^{\infty} g_{\i|_n}([0,1])
\label{gd ting}
\end{eqnarray}
where $\Sigma_{2d}$ denotes the full shift on $2d$ symbols and so the set on the right hand side of (\ref{gd ting}) is a self-similar set. 

Consider $m_t \circ \overline{\Pi}^{-1}$ as a measure on $\Sigma_{2d}$ and denote this by $m_t'$. Also let $\overline{\Pi}': \Sigma_{2d} \to \R$ be the projection onto the self-similar set
$$\overline{\Pi}'(\i)=  \bigcap_{n=1}^{\infty} g_{\i|_n}([0,1]).$$
Since $m_t$ is an invariant ergodic measure on $\Sigma$, it is straightforward to show that $m_t'$ is also invariant and ergodic for the full shift. Then by Theorem 2.8 in \cite {fenghu}, $\overline{\Pi}'(m_t')$ is exact dimensional and therefore by the relationship between the pairs $m_t, m_t'$ and $\overline{\Pi}, \overline{\Pi}'$ it immediately follows that $\overline{\Pi}(m_t)$ is also exact dimensional, completing the claim. 

However $\pi_1(\mu_1)$ and $\pi_2(\mu_2)$ are both absolutely continuous with respect to $\overline{\Pi}(m_1)$ and $\pi_1(\mu_2)$ and $\pi_2(\mu_1)$ are both absolutely continuous with respect to $\overline{\Pi}(m_2)$  and so the result follows. 
\end{proof}

\section{Results}

We now obtain our results by using the structure of the K\"aenm\"aki measure described in the previous section.  For convenience, we assume that the underlying IFS satisfies the \emph{strong separation property}, which means that for distinct $i , j \in \mathcal{I}$, we have $S_i(F) \cap S_j(F) = \emptyset$.

\begin{thm} \label{main}
Assume the self-affine set $F$ satisfies the strong separation property and let $\mu$ be any K\"aenm\"aki measure for $F$. Then $\mu$ is exact dimensional, with the exact dimension given by
\[
\dim \mu \ = \ \frac{h(\mu)}{\chi_2(\mu)} \, + \,  \frac{\chi_2(\mu)-\chi_1(\mu)}{\chi_2(\mu)}\dim\pi_1(\mu_1) .
\]
Thus $\mu$ satisfies the appropriate version of the Ledrappier-Young formula.
\end{thm}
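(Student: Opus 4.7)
The plan is to decompose $\mu = \mu_1 + \mu_2$ (Corollary \ref{equivalence}) and to show that each of $\mu_1,\mu_2$ is exact-dimensional with the same exact dimension, equal to the right-hand side of the formula. By the built-in symmetry swapping $\tau$ with $\omega$ (equivalently $m_1$ with $m_2$), together with the equality $\dim \pi_1 \mu_1 = \dim \pi_2 \mu_2$ from Proposition \ref{projections2}, the analysis for $\mu_2$ is identical to that for $\mu_1$, so I would focus on $\mu_1$. Since a sum of two exact-dimensional measures of common exact dimension is itself exact-dimensional of that dimension, the theorem will follow.

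To compute the local dimension of $\mu_1$ at $\mu_1$-a.e.\ $x = \Pi(\mathbf{i})$, I would introduce for each small $r>0$ the stopping time $n = n(\mathbf{i}, r)$ determined by $\alpha_2(\mathbf{i}|_n) \leq r < \alpha_2(\mathbf{i}|_{n-1})$. For $\mathbf{i}$ generic under $m_1 \circ \tau^{-1}$, Proposition \ref{oseledets} and Corollary \ref{Lyap} yield $n \sim -\log r / \chi_2(\mu)$ and ensure that $S_{\mathbf{i}|_n}([0,1]^2)$ is a rectangle of short side $\asymp r$ and long side $\asymp r^{\chi_1(\mu)/\chi_2(\mu)} \gg r$, whose long axis is aligned (through the twist by $\tau$) with the unstable direction of the associated graph-directed system, for which $\pi_1 \mu_1$ is the natural projection. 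The core of the proof is to establish the two-sided bound
\[
\mu_1\bigl(B(x,r)\bigr) \ \asymp \ m_1\bigl([\tau(\mathbf{i})|_n]\bigr) \cdot \frac{\pi_1 \mu_1\bigl([\pi_1 x - r,\, \pi_1 x + r]\bigr)}{\pi_1 \mu_1\bigl(\pi_1 S_{\mathbf{i}|_n}([0,1]^2)\bigr)} .
\]
Taking logarithms and dividing by $\log r$, the first factor contributes $h(\mu)/\chi_2(\mu)$ via Shannon--McMillan--Breiman for the Gibbs measure $m_1$, combined with the identity $h(\mu)=h(m_1)$; the ratio contributes $(\chi_2(\mu)-\chi_1(\mu))/\chi_2(\mu) \cdot \dim \pi_1 \mu_1$ via exact-dimensionality of $\pi_1 \mu_1$ (Proposition \ref{projections2}) applied at scales $r$ and $r^{\chi_1(\mu)/\chi_2(\mu)}$. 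Summing these gives the stated Ledrappier--Young formula.

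The main obstacle is establishing the displayed two-sided bound. A level-$n$ cylinder $S_{\mathbf{j}|_n}([0,1]^2)$ can have either of two orientations depending on the parity of anti-diagonal matrices in $\mathbf{j}|_n$, and cylinders of the \emph{wrong} orientation (long axis perpendicular to that of $S_{\mathbf{i}|_n}$) can still intersect $B(x,r)$. A key step, made possible by the uniform exponential separation $\alpha_2(\mathbf{j}|_n)/\alpha_1(\mathbf{j}|_n) \to 0$ from Corollary \ref{Lyap} together with strong separation, is to show that wrongly-oriented cylinders contribute only a negligible share of $\mu_1(B(x,r))$ in the limit; this follows from a Gibbs-property estimate for $m_1$ combined with a covering bound showing that only exponentially few such cylinders can accumulate near $x$. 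Once this is dealt with, the remaining same-orientation cylinders are indexed by $\tau$-neighbours of $\mathbf{i}|_n$ in $\Sigma_A$, and strong separation converts counting them into a length-$r$ covering problem on the line resolved by $\pi_1 \mu_1$. The residual boundary effects at scale $r$ and the factor-of-two slack in the stopping time are routine, using bounded distortion of the graph-directed IFS and the Gibbs property \eqref{Gibbs}. Once the two-sided bound is in place, the conversion into the stated formula is purely logarithmic.
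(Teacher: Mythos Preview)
Your overall plan---decompose $\mu=\mu_1+\mu_2$ via Corollary~\ref{equivalence} and prove each summand is exact-dimensional with the stated value---matches the paper's strategy. But the core of your argument, the displayed two-sided bound, has two genuine problems.

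First, the formula as written is not correct. The denominator $\pi_1\mu_1\bigl(\pi_1 S_{\mathbf{i}|_n}([0,1]^2)\bigr)$ is the $\pi_1\mu_1$-mass of an interval of length $\alpha_1(\mathbf{i}|_n)$, and this interval receives contributions from \emph{every} part of $F$ that projects into it, not just from the single cylinder $S_{\mathbf{i}|_n}([0,1]^2)$; there is no reason it should be comparable to $m_1([\tau(\mathbf{i})|_n])$, so the ratio does not collapse the way you need. The paper instead blows up the primary strip by $S_{\mathbf{i}|_n}^{-1}$ and uses the quasi-Bernoulli property of $m_1$ on $\Sigma_A$ to obtain a factor $\pi_s^{\mathbf{i},n}(\mu_\star)\bigl(Q_1(\pi_s^{\mathbf{i},n}(\Pi(\sigma^n\mathbf{i})),\, r/\alpha_1(\mathbf{i}|_n))\bigr)$ evaluated at the \emph{shifted} point (Lemmas~\ref{measure} and~\ref{measure2}), not a ratio at the original point.

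Second, you have located the orientation obstruction in the wrong place. Under strong separation at scale $r\asymp\delta\,\alpha_2(\mathbf{i}|_n)$, the ball $B(x,r)$ meets only the single level-$n$ cylinder containing $x$, so ``wrongly-oriented neighbouring cylinders'' never arise and no covering bound of the type you describe is needed. The real obstruction is the parity of $\mathbf{i}|_n$ itself: when $A_{\mathbf{i}|_n}$ is anti-diagonal the blow-up swaps axes and the concatenation rule becomes $\tau(\mathbf{i}|_n\mathbf{j})=\tau(\mathbf{i}|_n)\,\omega(\mathbf{j})$ rather than $\tau(\mathbf{i}|_n)\,\tau(\mathbf{j})$, so the tail factor after applying quasi-Bernoulli involves $\pi_2$ and $\mu_2$ instead of $\pi_1$ and $\mu_1$. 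The paper deals with this not by estimating a contribution away, but by restricting to a subsequence $n_k$ of even-parity times lying inside an Egorov set on which the local dimension of the projected measure is uniformly close to $s_1$ (Lemmas~\ref{egorov} and~\ref{egorov2}); Birkhoff's theorem gives this set positive density, whence $n_{k+1}/n_k\to 1$, and this is precisely what lets one interpolate from the subsequence back to arbitrary radii $r$. Your sketch omits this entire mechanism, and without it neither direction of the $\asymp$ bound goes through at a generic stopping time $n(\mathbf{i},r)$.
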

We get the following corollary, which gives simpler formulae in the case where $\dim\pi_1(\mu_1)$ is what it is `expected to be'.
\begin{cor}\label{cor2.2}
If $\dim\pi_1(\mu_1)=\min\left\{\frac{h(\mu)}{\chi_1(\mu)},1\right\}$ then
$$\dim\mu=\left\{\begin{array}{lll}\frac{h(\mu)}{\chi_1(\mu)}&\text{ if }&h(\mu)\leq\chi_1(\mu)\\
1+\frac{h(\mu)-\chi_1(\mu)}{\chi_2(\mu)}&\text{ if }&h(\mu)>\chi_1(\mu).\end{array}\right.$$
\end{cor}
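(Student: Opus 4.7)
The plan is to simply substitute the given value of $\dim \pi_1(\mu_1)$ into the Ledrappier--Young formula of Theorem \ref{main} and perform a case analysis based on whether the minimum $\min\{h(\mu)/\chi_1(\mu), 1\}$ is attained by the first or second argument. No further machinery is needed; the corollary is a direct algebraic consequence of the main theorem, once the hypothesis is used to resolve which branch of the minimum applies. The statement is split precisely to match the two regimes $h(\mu) \leq \chi_1(\mu)$ and $h(\mu) > \chi_1(\mu)$, so the case split is forced upon us.

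First, I would handle the case $h(\mu) \leq \chi_1(\mu)$. Here the hypothesis yields $\dim \pi_1(\mu_1) = h(\mu)/\chi_1(\mu)$, and substituting into Theorem \ref{main} gives
\[
\dim \mu \ = \ \frac{h(\mu)}{\chi_2(\mu)} + \frac{\chi_2(\mu)-\chi_1(\mu)}{\chi_2(\mu)} \cdot \frac{h(\mu)}{\chi_1(\mu)}.
\]
Putting over a common denominator $\chi_1(\mu)\chi_2(\mu)$, the numerator telescopes to $h(\mu)\chi_2(\mu)$, so $\dim \mu = h(\mu)/\chi_1(\mu)$ as required.

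Next, in the case $h(\mu) > \chi_1(\mu)$, the hypothesis gives $\dim \pi_1(\mu_1) = 1$. Substituting into Theorem \ref{main} yields
\[
\dim \mu \ = \ \frac{h(\mu)}{\chi_2(\mu)} + \frac{\chi_2(\mu)-\chi_1(\mu)}{\chi_2(\mu)} \ = \ 1 + \frac{h(\mu)-\chi_1(\mu)}{\chi_2(\mu)},
\]
which is the second branch of the formula. Both cases use only that $\chi_1(\mu), \chi_2(\mu) > 0$, which holds by Proposition \ref{oseledets}, and that $\chi_1(\mu) \leq \chi_2(\mu)$, which holds by convention (and strictly by Corollary \ref{Lyap}).

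There is no genuine obstacle here: the whole content of the corollary is packed into Theorem \ref{main}, and the only minor subtlety is making sure the hypothesis uniquely determines the value of $\dim \pi_1(\mu_1)$ in each regime. At the boundary $h(\mu) = \chi_1(\mu)$ both branches agree (giving $\dim \mu = 1$), so there is no ambiguity in how the cases are assigned.
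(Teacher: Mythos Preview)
Your proof is correct and follows essentially the same approach as the paper: both argue by direct substitution of the hypothesised value of $\dim\pi_1(\mu_1)$ into the Ledrappier--Young formula of Theorem \ref{main}, splitting into the two cases $h(\mu)\leq\chi_1(\mu)$ and $h(\mu)>\chi_1(\mu)$ and simplifying algebraically. Your added remark about the boundary case $h(\mu)=\chi_1(\mu)$ is a harmless extra observation.
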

\begin{proof}
We first suppose that $\dim\pi_1(\mu_1)=\frac{h(\mu)}{\chi_1(\mu)}\leq1$. In this case we have
\begin{eqnarray*}
\dim\mu&=&\frac{h(\mu)}{\chi_2(\mu)} \, + \,  \frac{\chi_2(\mu)-\chi_1(\mu)}{\chi_2(\mu)} \dim\pi_1(\mu_1)\\
&=&\frac{h(\mu)}{\chi_2(\mu)} \, + \,  \frac{\chi_2(\mu)-\chi_1(\mu)}{\chi_2(\mu)}\frac{h(\mu)}{\chi_1(\mu)}\\
&=&\frac{h(\mu)}{\chi_1(\mu)}.
\end{eqnarray*}
On the other hand if $\dim\pi_1(\mu_1)=1<\frac{h(\mu)}{\chi_1(\mu)}$ then
\begin{eqnarray*}
\dim\mu&=&\frac{h(\mu)}{\chi_2(\mu)} \, + \,  \frac{\chi_2(\mu)-\chi_1(\mu)}{\chi_2(\mu)} \dim\pi_1(\mu_1)\\
&=&\frac{h(\mu)}{\chi_2(\mu)} \, + \,  \frac{\chi_2(\mu)-\chi_1(\mu)}{\chi_2(\mu)}\\
&=&1+\frac{h(\mu)-\chi_1(\mu)}{\chi_2(\mu)}
\end{eqnarray*}
completing the proof. 
\end{proof}

In \cite{MS} Morris and Shmerkin show in Proposition 7.2 that for a large class of such self-affine systems the dimension of the attractor is given by the expected affinity dimension. Here we are able to show that with a condition on the dimension of the projected measure, such systems have an ergodic measure of maximal dimension. However our measures on the projected system are not Bernoulli so, as yet, it is not possible to apply Hochman's results (\cite{hochman1},\cite{hochman2}) as is done in Propositions 7.2 and 7.3 in \cite{MS}. However it should be possible to adapt the necessary results of Hochman to the Markov and Gibbs case in which case the results of Proposition 7.2 and 7.3 in \cite{MS} would extend to include the existence of a measure of maximal dimension.

 However, transversality techniques can be used to study the dimension of the projected measure. In particular we can use the proof of Theorem 1 in \cite{KSS} to show that $\dim\pi_1(\mu)=\dim\pi_2(\mu)=\min\left\{\frac{h(\mu)}{\chi_1(\mu)},1\right\}$ for almost all translations $\mathbf{t} = (t_1, \dots, t_d) \in \mathbb{R}^{2d}$ satisfying a suitable condition on the norms of the matrices. To apply the results here we let $u_i=a_i$ if $1\leq i\leq l$ and let $u_i=a_i\max\{b_j:l+1\leq j\leq d\}$ if $l+1\leq i\leq d$. We also let $v_i=b_i$ if $1\leq i\leq l$ and let $v_i=b_i\max\{a_j:l+1\leq j\leq d\}$ if $l+1\leq i\leq d$. This means that in our setting the conditions needed in order to apply Lemma 3 in \cite{KSS} and hence show that $\dim\pi_1(\mu)=\dim\pi_2(\mu)=\min\left\{\frac{h(\mu)}{\chi_1(\mu)},1\right\}$ are that if $i\neq j$ then $u_i+u_j<1$ and $v_i+v_j<1$.

 This yields the following corollary which is a limited extension  of Theorem 1.9  in \cite{jordanetal} in this particular case, where the matrices are assumed to have norm smaller than $1/2$ (strong separation is not necessary in \cite{jordanetal}). 
 
  Note that the conditions for the following corollary are satisfied if $\norm{A_i}<\frac{1}{2}$ for $1\leq i\leq l$ and $\norm{A_i}<\frac{1}{\sqrt{2}}$ for $l+1\leq i\leq d$.
\begin{cor} \label{cor2.3}
Suppose that for $i,j\in\{1,\ldots,d\}$ with $i\neq j$ we have $u_i+u_j<1$ and $v_i+v_j<1$. We then have
if $\mu$ is a K\"{a}enm\"aki measure where $s \in (0,2)$, then for Lebesgue almost all $\mathbf{t} \in \R^{2d}$ where the strong separation property is satisfied we have that
$$\dim\mu=\left\{\begin{array}{lll}\frac{h(\mu)}{\chi_1(\mu)}&\text{ if }&h(\mu)\leq\chi_1(\mu)\\
1+\frac{h(\mu)-\chi_1(\mu)}{\chi_2(\mu)}&\text{ if }&h(\mu)>\chi_1(\mu).\end{array}\right.$$
\end{cor}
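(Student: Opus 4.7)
By Theorem~\ref{main} and Corollary~\ref{cor2.2}, the corollary follows once one shows, under the stated hypotheses, that
\[
\dim \pi_1(\mu_1) \ = \ \min\left\{\frac{h(\mu)}{\chi_1(\mu)},\,1\right\}
\]
for Lebesgue almost every admissible $\mathbf{t}\in\mathbb{R}^{2d}$. My plan is to realise $\pi_1(\mu_1)$ as the projection of the Gibbs measure $m_1$ through the one-dimensional graph-directed IFS $\{g_i\}$ of Section~\ref{class}, and then apply a direct transcription of the transversality-based argument from the proof of \cite[Theorem~1]{KSS}.

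The first stage is bookkeeping. From the proof of Proposition~\ref{projections2}, $\pi_1(\mu_1)$ is absolutely continuous with respect to $\overline{\Pi}(m_1)$, so $\dim \pi_1(\mu_1) = \dim \overline{\Pi}(m_1)$; by Corollary~\ref{Lyap} the Lyapunov exponent of $m_1$ for the 1D IFS $\{g_i\}$ equals $-\int f_{1,1}\,dm_1 = \chi_1(\mu)$; and the Gibbs property together with Corollary~\ref{equivalence} and the Shannon--McMillan--Breiman theorem applied to $\nu = m$ identifies the Kolmogorov--Sinai entropy $h(m_1) = P(s) - \int f_{1,s}\,dm_1$ of $m_1$ on $\Sigma_A$ with $h(\mu)$. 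The standard upper bound $\dim \overline{\Pi}(m_1) \leq \min\{h(\mu)/\chi_1(\mu),\,1\}$ is then immediate.

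The main step is the matching lower bound. Following the proof of \cite[Theorem~1]{KSS}, one parametrises the natural projections $\overline{\Pi}^{\mathbf{t}}$ by $\mathbf{t}$, estimates the $\alpha$-energy of $\overline{\Pi}^{\mathbf{t}}(m_1)$ for $\alpha$ slightly below $\min\{h(\mu)/\chi_1(\mu),\,1\}$, and uses Fubini to push the estimate onto the parameter side, where the transversality inequality of \cite[Lemma~3]{KSS} controls the contribution from pairs of cylinders. The hypotheses $u_i+u_j<1$ and $v_i+v_j<1$ are exactly the pairwise-separation input that \cite[Lemma~3]{KSS} requires: because anti-diagonal generators swap coordinates under composition, the effective one-step horizontal (resp.\ vertical) diameters need to be inflated to $u_i$ (resp.\ $v_i$), the extra factor $\max_j b_j$ (resp.\ $\max_j a_j$) arising from composing an anti-diagonal generator with an arbitrary successor.

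The principal obstacle is the transfer of the \cite{KSS} argument from a Bernoulli measure on a full shift to our Gibbs measure $m_1$ on the proper mixing SFT $\Sigma_A \subsetneq \Sigma_{2d}$. Their proof uses only shift-invariance and ergodicity, Gibbs-type cylinder asymptotics for controlling local dimensions almost everywhere, and the pairwise transversality bound; all three are available here, since $\Sigma_A$ is mixing (as verified in Section~\ref{class}) and $m_1$ is Gibbs. Once this technical transfer is in place, one obtains $\dim \pi_1(\mu_1) = \min\{h(\mu)/\chi_1(\mu),\,1\}$, and Corollary~\ref{cor2.2} delivers the piecewise formula.
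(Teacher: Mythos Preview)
Your approach is essentially the same as the paper's: the paper does not give a formal proof of this corollary but explains, in the paragraph preceding it, that the transversality argument from the proof of \cite[Theorem~1]{KSS} (with \cite[Lemma~3]{KSS} supplying the needed separation input under exactly the hypotheses $u_i+u_j<1$ and $v_i+v_j<1$) yields the expected projected dimension for Lebesgue almost every translation, after which Corollary~\ref{cor2.2} gives the piecewise formula. Your proposal follows this route and fills in details the paper leaves implicit, such as the identifications $h(m_1)=h(\mu)$ and $-\int f_{1,1}\,dm_1=\chi_1(\mu)$ and the transfer of the KSS energy argument from Bernoulli measures on full shifts to a Gibbs measure on the mixing SFT $\Sigma_A$.
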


\section{Proof of Theorem \ref{main}}
In \cite{PU} Przytycki   and Urba\'{n}ski relate the dimension of a self-affine measure in two dimensions to the dimension of a self-similar measure in one dimension (in their case a Bernoulli convolution). We take the same approach, however we need to consider two measures in one dimension which in our case will be $\pi_1(\mu_1)$ and $\pi_2(\mu_2)$. Rather than being strictly self-similar, these are Gibbs measures on a graph directed self-similar iterated function system. This approach is also similar to one used by Falconer and Kempton in \cite{FK}.

\subsection{Some preliminary estimates}

It will be convenient to calculate the local dimension by measuring squares rather than balls in $\mathbb{R}^2$. To this end we introduce some notation. Let $Q_1(x,r)$ denote the \textit{one dimensional square} of side $r$ centred at $x$, given by $Q_1(x,r)=[x-\frac{r}{2}, x+\frac{r}{2}]$. For $\mathbf{x}=(x,y)$ let $Q_2(\mathbf{x}, r)$ denote the \textit{2-dimensional square} of side $r$ which is centred at $\mathbf{x}$, given by 
\[
Q_2(\mathbf{x}, r) \ = \ \left\{ (x', y') : |x-x'| \leq \frac{r}{2}  \text{ and } |y-y'|\leq \frac{r}{2} \right\}.
\]
Let $\mathbf{x}=(x,y) \in F$ with symbolic expansion $\mathbf{i} \in \Sigma$ and let $n \in \mathbb{N}$. Consider the cylinder $S_{\mathbf{i}|_n}([0,1]^2)$. Suppose the side lengths are distinct, so $\alpha_2(\mathbf{i}|_n) < \alpha_1(\mathbf{i}|_n)$. We shall call the longer side of $S_{\mathbf{i}|_n}([0,1]^2)$ the \textit{primary side}. Additionally we shall call the axis parallel to this side the \textit{primary axis} and denote the projection onto the primary axis by $\pi_p^{\mathbf{i},n}$. We may also call the direction of the primary axis the \textit{primary direction}. So that this is all well-defined even when $\alpha_1(\mathbf{i}|_n)=\alpha_2(\mathbf{i}|_n)$ we agree that in this scenario the primary axis is the $y$ axis. We denote the strip of all points inside $S_{\mathbf{i}|_n}([0,1]^2)$ that lie $\frac{r}{2}$-close to $\pi_p^{\mathbf{i},n}(\mathbf{x})$ in the primary direction by $B(\mathbf{x}, n, r)$, and refer to this as the \textit{primary strip}. We define the \textit{secondary projection} to be the primary projection if the linear part of $S_{\mathbf{i}|_n}$ preserves each co-ordinate axis (i.e., an even number of the linear parts $\left\{A_{i_1},\dots ,A_{i_n}\right\}$ are anti-diagonal matrices), and the other co-ordinate projection otherwise. We denote it by $\pi_s^{\mathbf{i},n}(\mathbf{x})$.

In order to prove the desired lower bound for the local dimension of the measure (which corresponds to finding an appropriate upper bound for the measure of any given primary strip), we  use sub-multiplicativity of the K\"aenm\"aki measure.  We can get an upper bound on the measure of a primary strip in terms of the product of the measure of an appropriate cylinder and an appropriate projected measure of the blow up of the strip.

\begin{lma}
Let $\mathbf{x}\in F$ with symbolic expansion $\mathbf{i}\in \Sigma$. For any $n \in \mathbb{N}$ and $r>0$ we have
$$ \mu(B(\mathbf{x}, n, r)) \ \leq  \ C m([\mathbf{i}|_n])\pi_s^{\mathbf{i},n}(\mu)\left(Q_1\left(\pi_s^{\mathbf{i},n}(\Pi(\sigma^n(\mathbf{i})), \frac{r}{\alpha_1(\mathbf{i}|_n)}\right)\right)$$  
\label{measure}
where $C \geq 1$ is the uniform constant giving submultiplicativity of $m$.
\end{lma}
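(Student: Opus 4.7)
The strategy is to convert the two-dimensional strip measure estimate into a one-dimensional one by projecting, and then apply the submultiplicativity of $m$ to factor out the $\mathbf{i}|_n$-part. First, I use strong separation to argue that any $\mathbf{j}\in\Sigma$ with $\Pi(\mathbf{j}) \in B(\mathbf{x},n,r)$ must satisfy $\mathbf{j} \in [\mathbf{i}|_n]$, since $B(\mathbf{x},n,r) \subseteq S_{\mathbf{i}|_n}([0,1]^2)$ forces the first $n$ symbols of $\mathbf{j}$ to agree with $\mathbf{i}|_n$. Writing $\mathbf{j} = \mathbf{i}|_n \mathbf{k}$ with $\mathbf{k} \in \Sigma$, so that $\Pi(\mathbf{j}) = S_{\mathbf{i}|_n}(\Pi(\mathbf{k}))$, I obtain
\[
\mu(B(\mathbf{x},n,r)) \;=\; m\bigl(\{\mathbf{i}|_n\mathbf{k} : \mathbf{k} \in \Sigma,\ S_{\mathbf{i}|_n}(\Pi(\mathbf{k})) \in B(\mathbf{x},n,r)\}\bigr).
\]

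The geometric heart of the argument is the intertwining identity
\[
\pi_p^{\mathbf{i},n}\bigl(S_{\mathbf{i}|_n}(z)\bigr) \;=\; \alpha_1(\mathbf{i}|_n)\,\pi_s^{\mathbf{i},n}(z) + c(\mathbf{i}|_n), \qquad z \in [0,1]^2,
\]
which I verify by a four-case check depending on the parity of the number of anti-diagonal factors among $A_{i_1},\dots,A_{i_n}$ (this determines whether $S_{\mathbf{i}|_n}$ has diagonal or anti-diagonal linear part) and on whether the primary axis of $S_{\mathbf{i}|_n}([0,1]^2)$ is horizontal or vertical. In each case $\pi_s^{\mathbf{i},n}$ is, by its very definition, precisely that coordinate on the domain which the linear part of $S_{\mathbf{i}|_n}$ rescales by $\alpha_1(\mathbf{i}|_n)$ and dispatches into the primary axis of the codomain, so the identity is immediate from the definitions. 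Using $\mathbf{x} = \Pi(\mathbf{i}) = S_{\mathbf{i}|_n}(\Pi(\sigma^n\mathbf{i}))$, the strip condition $|\pi_p^{\mathbf{i},n}(S_{\mathbf{i}|_n}(\Pi(\mathbf{k}))) - \pi_p^{\mathbf{i},n}(\mathbf{x})| \le r/2$ therefore simplifies to
\[
\bigl|\pi_s^{\mathbf{i},n}(\Pi(\mathbf{k})) - \pi_s^{\mathbf{i},n}(\Pi(\sigma^n\mathbf{i}))\bigr| \;\leq\; \frac{r}{2\alpha_1(\mathbf{i}|_n)},
\]
i.e.\ $\Pi(\mathbf{k}) \in (\pi_s^{\mathbf{i},n})^{-1}\bigl(Q_1(\pi_s^{\mathbf{i},n}(\Pi(\sigma^n\mathbf{i})), r/\alpha_1(\mathbf{i}|_n))\bigr)$.

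To close the argument, I invoke the submultiplicativity of the K\"{a}enm\"{a}ki measure established after \eqref{Gibbs}: $m([\mathbf{i}|_n\mathbf{j}]) \leq C\,m([\mathbf{i}|_n])\,m([\mathbf{j}])$ for all $\mathbf{j}\in\mathcal{I}^*$. Approximating any Borel $E \subseteq \Sigma$ by a finite disjoint union of cylinders and summing this inequality gives $m(\mathbf{i}|_n \cdot E) \leq C\,m([\mathbf{i}|_n])\,m(E)$ for all Borel $E$. Applying this with $E = \Pi^{-1}\bigl((\pi_s^{\mathbf{i},n})^{-1}(Q_1(\pi_s^{\mathbf{i},n}(\Pi(\sigma^n\mathbf{i})), r/\alpha_1(\mathbf{i}|_n)))\bigr)$, and rewriting
\[
m(E) \;=\; \mu\bigl((\pi_s^{\mathbf{i},n})^{-1}(Q_1(\cdot, r/\alpha_1(\mathbf{i}|_n)))\bigr) \;=\; \pi_s^{\mathbf{i},n}(\mu)\bigl(Q_1(\cdot, r/\alpha_1(\mathbf{i}|_n))\bigr),
\]
produces exactly the claimed inequality. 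The one mildly delicate ingredient is the intertwining identity, whose statement is notationally dense but whose verification reduces to a direct unpacking of the definitions of $\pi_p^{\mathbf{i},n}$, $\pi_s^{\mathbf{i},n}$, and $\alpha_1(\mathbf{i}|_n)$ in the four geometric configurations; everything else is a routine combination of the strong separation property with the submultiplicative Gibbs bound.
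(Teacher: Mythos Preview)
Your proof is correct and follows essentially the same approach as the paper. The paper packages the argument via a stopping-time family $\mathcal{J}$ of cylinders that exhaust $B(\mathbf{x},n,r)$ in measure, applies submultiplicativity cylinder-by-cylinder, and then identifies $\sum_{\mathbf{j}\in\mathcal{J}} m([\mathbf{j}])$ with the projected measure by noting that $S_{\mathbf{i}|_n}^{-1}B(\mathbf{x},n,r)$ is a strip of width $r/\alpha_1(\mathbf{i}|_n)$; you instead extend submultiplicativity from cylinders to Borel sets once and for all and make the underlying geometric fact (your intertwining identity) explicit, which is exactly what the paper is using implicitly in that last identification.
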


\begin{proof}
Let
\[
\mathcal{J}  \  = \ \mathcal{J}(\mathbf{i}, n, r)  \ = \ \left\{ \textbf{j} \in \mathcal{I}^* \ : \ S_{\mathbf{i}|_n \textbf{j}}([0,1]^2) \subseteq B(\mathbf{x}, n, r) \text{ and } S_{\mathbf{i}|_n \textbf{j}^\dagger}([0,1]^2) \nsubseteq B(\mathbf{x}, n, r)   \right\}
\]
where $\textbf{j}^\dagger$ is $\textbf{j}$ with the last symbol removed.  Note that by our separation assumption the family of rectangles $\{S_{\mathbf{i}|_n \textbf{j}}([0,1]^2)\}_{\textbf{j} \in \mathcal{J} }$ are pairwise disjoint and exhaust $ B(\mathbf{x}, n, r)$ in measure.  Thus
\begin{eqnarray*}
\mu(B(\mathbf{x}, n, r))  &=& \sum_{\textbf{j} \in \mathcal{J} } m( [\mathbf{i}|_n \textbf{j} ] )  \\ \\
&\leq& C m( [ \mathbf{i}|_n ] )   \sum_{\textbf{j} \in \mathcal{J} } m(  [\textbf{j} ] )  \\ \\
&=&  Cm([\mathbf{i}|_n])\pi_s^{\mathbf{i},n}(\mu)\left(Q_1\left(\pi_s^{\mathbf{i},n}(\Pi(\sigma^n(\mathbf{i})), \frac{r}{\alpha_1(\mathbf{i}|_n)}\right)\right)
\end{eqnarray*}
where the last equality follows since the  family of rectangles $\{S_{\textbf{j}}([0,1]^2)\}_{\textbf{j} \in \mathcal{J} }$ are pairwise disjoint and exhaust $ S_{\mathbf{i}|_n}^{-1}B(\mathbf{x}, n, r)$ in measure.  Then, noting that $ S_{\mathbf{i}|_n}^{-1}B(\mathbf{x}, n, r)$ is a strip with one side of length 1 and the other of length  $r/\alpha_1(\mathbf{i}|_n)$ we have
\begin{eqnarray*}
\mu \left(S_{\mathbf{i}|_n}^{-1} \left(B(\mathbf{x}, n, r) \right) \right) &=&   \pi_s^{\mathbf{i},n}(\mu)\left(Q_1\left(\pi_s^{\mathbf{i},n}(\Pi(\sigma^n(\mathbf{i})), \frac{r}{\alpha_1(\mathbf{i}|_n)}\right)\right)
\end{eqnarray*}
as required.
\end{proof}

Next we prove an analogue of Lemma \ref{measure} giving an upper bound for the local dimension (so a lower bound for the measure). Here, given $\mathbf{i}\in\Sigma$, we will use that if $\tau(i_m)=i_m$ then $A_{i_1}\cdots A_{i_{m-1}}$ is diagonal whereas if $\tau(i_m)=i_m+d$ then $A_{i_1}\cdots A_{i_{m-1}}$ is anti-diagonal. This will allow us to get our analogue of Lemma \ref{measure} along a suitable subsequence (for typical points).

\begin{lma}
Let $\mathbf{x}\in F$ with symbolic expansion $\mathbf{i}\in \Sigma$, such that $\mathbf{i}$ satisfies that $l \leq i_n \leq |\mathcal{I}| $ for infinitely many $n$, i.e. infinitely many of the maps $S_{i_n}$ are `anti-diagonal'. (Observe that it is a direct consequence of the ergodic theorem that the set of such $\mathbf{i}$ is a set of full measure.) Let $n_k$ be any subsequence for which $1 \leq \tau_{n_k+1}(\mathbf{i}) \leq |\mathcal{I}|$ for all $k \in \mathbb{N}$.  Then for any $r>0$ and $k \in \mathbb{N}$,
\begin{eqnarray*}
\nu\circ \Pi^{-1}(B(\mathbf{x}, n_k, r)) &\geq& \mu_t(B(\mathbf{x}, n_k, r)) \\ 
  &\geq&  C  m_t \circ \tau([\mathbf{i}|_{n_k}])\pi_s^{\mathbf{i},n_k}(\mu_t)\left(Q_1\left(\pi_s^{\mathbf{i},n_k}(\Pi(\sigma^{n_k}(\mathbf{i})), \frac{r}{\alpha_1(\mathbf{i}|_{n_k})}\right)\right)
\end{eqnarray*} 
for $t=1, 2$  and where the constant $C$ is independent of $\mathbf{x}$, $k$, $r$, $t$. 
\label{measure2}
\end{lma}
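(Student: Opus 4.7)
The first inequality $\nu\circ\Pi^{-1}(B(\mathbf{x},n_k,r))\geq\mu_t(B(\mathbf{x},n_k,r))$ is immediate from the definition $\nu(E)=m_1(\tau(E))+m_2(\tau(E))$, applied to $E=\Pi^{-1}(B(\mathbf{x},n_k,r))$ and discarding the non-negative term for the index other than $t$. For the substantive lower bound on $\mu_t(B(\mathbf{x},n_k,r))$, the plan is to run the proof of Lemma \ref{measure} in reverse, trading the sub-multiplicativity of the K\"aenm\"aki measure for the two-sided quasi-Bernoulli (Gibbs) property of $m_t$, available because $f_{t,s}$ is locally constant and $\Sigma_A$ is mixing. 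With $\mathcal{J}=\mathcal{J}(\mathbf{i},n_k,r)$ defined exactly as in Lemma \ref{measure}, the strong separation property makes $\{S_{\mathbf{i}|_{n_k}\mathbf{j}}(F)\}_{\mathbf{j}\in\mathcal{J}}$ a pairwise disjoint subfamily of $B(\mathbf{x},n_k,r)$, giving
\[
\mu_t(B(\mathbf{x},n_k,r))\;\geq\;\sum_{\mathbf{j}\in\mathcal{J}}m_t(\tau([\mathbf{i}|_{n_k}\mathbf{j}])).
\]

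The combinatorial crux is the identity $\tau([\mathbf{i}|_{n_k}\mathbf{j}])=[\tau(\mathbf{i}|_{n_k})\tau(\mathbf{j})]_A$ as a full $\Sigma_A$-cylinder. The hypothesis $1\leq\tau_{n_k+1}(\mathbf{i})\leq d$ is equivalent to an even number of the matrices $A_{i_1},\ldots,A_{i_{n_k}}$ being anti-diagonal, so the coding $\tau$ restarts with parity zero at position $n_k+1$, whence the word concatenation $\tau(\mathbf{i}|_{n_k}\mathbf{j})=\tau(\mathbf{i}|_{n_k})\cdot\tau(\mathbf{j})$ holds in $\{1,\ldots,2d\}^{n_k+|\mathbf{j}|}$. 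A direct case-check against the transition matrix $A$ shows this concatenation is admissible in $\Sigma_A$ (the last symbol of $\tau(\mathbf{i}|_{n_k})$ necessarily lies in $\{1,\ldots,l-1\}\cup\{d+l,\ldots,2d\}$, from which every $j_1=\tau_1(\mathbf{j})\in\{1,\ldots,d\}$ is allowed), and that the full cylinder $[\tau(\mathbf{i}|_{n_k})\tau(\mathbf{j})]_A$ is contained in $\tau(\Sigma)$ because its first symbol is $i_1\leq d$; combined with the injectivity of $\tau$ this gives the stated identity. The quasi-Bernoulli property of the Gibbs measure $m_t$ then yields
\[
m_t(\tau([\mathbf{i}|_{n_k}\mathbf{j}]))\;\geq\;C^{-1}\,m_t(\tau([\mathbf{i}|_{n_k}]))\,m_t(\tau([\mathbf{j}]))
\]
for a uniform $C\geq 1$.

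It remains to identify $\sum_{\mathbf{j}\in\mathcal{J}}m_t(\tau([\mathbf{j}]))$ with $\pi_s^{\mathbf{i},n_k}(\mu_t)(Q_1(\pi_s^{\mathbf{i},n_k}(\Pi(\sigma^{n_k}\mathbf{i})),r/\alpha_1(\mathbf{i}|_{n_k})))$. Because $A_{\mathbf{i}|_{n_k}}$ is diagonal, $S_{\mathbf{i}|_{n_k}}$ preserves axes, so $\pi_s^{\mathbf{i},n_k}=\pi_p^{\mathbf{i},n_k}$ and $S_{\mathbf{i}|_{n_k}}^{-1}(B(\mathbf{x},n_k,r))$ is the axis-aligned strip $[0,1]^2\cap(\pi_s^{\mathbf{i},n_k})^{-1}(Q_1)$ whose primary projection is precisely the target $Q_1$. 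By the definition of $\mathcal{J}$, the disjoint cylinders $\{S_\mathbf{j}(F)\}_{\mathbf{j}\in\mathcal{J}}$ exhaust $F\cap S_{\mathbf{i}|_{n_k}}^{-1}(B(\mathbf{x},n_k,r))$ up to the two boundary lines of the strip (each interior point is eventually covered by a sufficiently short cylinder), and those lines are $\mu_t$-null because $\pi_1(\mu_t)$ and $\pi_2(\mu_t)$ are non-atomic by Proposition \ref{projections2}. Hence
\[
\sum_{\mathbf{j}\in\mathcal{J}}m_t(\tau([\mathbf{j}]))\;=\;\mu_t\bigl((\pi_s^{\mathbf{i},n_k})^{-1}(Q_1)\bigr)\;=\;\pi_s^{\mathbf{i},n_k}(\mu_t)(Q_1),
\]
and chaining the three displayed inequalities delivers the required lower bound.

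I expect the main obstacle to be the combinatorial step: verifying that $\tau([\mathbf{i}|_{n_k}\mathbf{j}])$ really is a \emph{full} $\Sigma_A$-cylinder rather than a proper subcylinder, since only then does the Gibbs estimate apply with a uniform constant. The even-parity choice of $n_k$, possible along an infinite subsequence because $i_n\geq l$ holds infinitely often, is precisely what forces this identification to hold, which is also why the conclusion is only claimed along the subsequence.
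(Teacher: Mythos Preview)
Your proposal is correct and follows essentially the same approach as the paper: decompose $B(\mathbf{x},n_k,r)$ via the family $\mathcal{J}$, use the even-parity condition at time $n_k$ to obtain the concatenation identity $\tau([\mathbf{i}|_{n_k}\mathbf{j}])=\tau([\mathbf{i}|_{n_k}])\tau([\mathbf{j}])$, apply the quasi-Bernoulli property of $m_t$, and identify the resulting sum with the projected measure. If anything, you supply more detail than the paper does on the combinatorial step (admissibility in $\Sigma_A$ and why the resulting cylinder is full), which the paper handles in a single sentence.
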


\begin{proof}
Let $\mathcal{J}$ be as in the proof of Lemma \ref{measure}. Then, using the quasi-Bernoulli properties of $\mu_t$ we have
\begin{eqnarray*}
\mu_t(B(\mathbf{x}, n_k, r))  &=& \sum_{\textbf{j} \in \mathcal{J} } m_t \circ \tau( [\mathbf{i}|_{n_k} \textbf{j} ] )  \\ \\
&=& \sum_{\textbf{j} \in \mathcal{J} } m_t (\tau( [\mathbf{i}|_{n_k} ])\tau([\textbf{j} ] ))  \\ \\
&\geq& c m_t \circ \tau( [ \mathbf{i}|_{n_k} ] )   \sum_{\textbf{j} \in \mathcal{J} } m_t \circ \tau(  [\textbf{j} ] )  \\ \\
&=&  cm_t \circ \tau([\mathbf{i}|_{n_k}])\pi_s^{\mathbf{i},n_k}(\mu_t)\left(Q_1\left(\pi_s^{\mathbf{i},n_k}(\Pi(\sigma^{n_k}(\mathbf{i})), \frac{r}{\alpha_1(\mathbf{i}|_{n_k})}\right)\right)
\end{eqnarray*}
where the second equality holds because we are looking at times $n_k$ when $\mathbf{i}|_{n_k}$ has seen an even number of rotations, $c$ is a constant that comes from the quasi-Bernoulli properties of $m_1$ and $m_2$ and everything else follows by the same observations as in the proof of Lemma \ref{measure}.

Finally, the result follows because $\nu\circ \Pi^{-1}(B(x, n_k, r)) \geq\mu_t(B(x, n_k, r))$.
\end{proof}

Even though the measures $m_t \circ \tau$ are not generally ergodic, fortunately they share some ergodic properties with $m$.  In particular, we can control their ``Lyapunov exponents".

The following is essentially restating Corollary \ref{Lyap}
\begin{lma}
For $t=1,2$,  $m_t \circ \tau$ almost all $\mathbf{i} \in \Sigma$ satisfy
$$
 - \lim_{n \to \infty} \frac{1}{n} \log \alpha_1(\i|_n)   \ = \  \chi_1(\mu) 
<
-  \lim_{n \to \infty} \frac{1}{n} \log \alpha_2(\i|_n)    \ =  \    \chi_2(\mu).
$$
Thus for $\nu$-almost every $\mathbf{i} \in \Sigma$,
$$\frac{\alpha_2(\mathbf{i}|_n)}{\alpha_1(\mathbf{i}|_n)} \to 0.$$

\label{sequence1}
\end{lma}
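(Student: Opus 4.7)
\emph{Proof proposal.} As the authors indicate, this lemma is essentially a repackaging of Corollary \ref{Lyap}, so the plan is to transcribe the argument used there, emphasising which statements hold $m_t\circ\tau$-a.e.\ for each $t\in\{1,2\}$ separately. The three ingredients are Lemma \ref{key} (identifying singular values with exponentials of Birkhoff sums of $f_{1,1},f_{2,1}$ along $\tau(\mathbf{i})$), the Birkhoff ergodic theorem applied to the ergodic Gibbs measures $m_1,m_2$ on the mixing subshift $\Sigma_A$, and Lemma \ref{size} (which decides which of $\int f_{1,1}\,dm_t$ and $\int f_{2,1}\,dm_t$ is larger).

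First I would set $s=1$ in Lemma \ref{key} to obtain $\alpha_1(\mathbf{i}|_n)=\max\{\exp S_n f_{1,1}(\tau(\mathbf{i})),\,\exp S_n f_{2,1}(\tau(\mathbf{i}))\}$. A direct check from the definitions shows $(f_{1,1}+f_{2,1})(\mathbf{j})=\log(a_{j_1}b_{j_1})$ for $j_1\le d$ and $\log(a_{j_1-d}b_{j_1-d})$ otherwise, so that $\exp S_n(f_{1,1}+f_{2,1})(\tau(\mathbf{i}))=|\det A_{\mathbf{i}|_n}|=\alpha_1(\mathbf{i}|_n)\alpha_2(\mathbf{i}|_n)$. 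Consequently $\alpha_2(\mathbf{i}|_n)=\min\{\exp S_n f_{1,1}(\tau(\mathbf{i})),\,\exp S_n f_{2,1}(\tau(\mathbf{i}))\}$. Now Birkhoff's theorem on $(\Sigma_A,\sigma,m_1)$ gives $\frac{1}{n}S_n f_{i,1}\to\int f_{i,1}\,dm_1$ for $m_1$-a.e.\ point of $\Sigma_A$; pulling back via the Borel injection $\tau$ yields the same limits for $m_1\circ\tau$-a.e.\ $\mathbf{i}\in\Sigma$. By Lemma \ref{size} we have $\int f_{1,1}\,dm_1\ge \int f_{2,1}\,dm_1$, so the max is attained by $f_{1,1}$ and we obtain $-\frac{1}{n}\log\alpha_1(\mathbf{i}|_n)\to-\int f_{1,1}\,dm_1=\chi_1(\mu)$ and $-\frac{1}{n}\log\alpha_2(\mathbf{i}|_n)\to-\int f_{2,1}\,dm_1=\chi_2(\mu)$. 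The symmetry identities $f_{1,1}(\tau(\mathbf{i}))=f_{2,1}(\omega(\mathbf{i}))$ and $f_{2,1}(\tau(\mathbf{i}))=f_{1,1}(\omega(\mathbf{i}))$, together with the equalities in Lemma \ref{size}, then deliver the same limits for $m_2\circ\tau$-a.e.\ $\mathbf{i}\in\Sigma$.

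Strict inequality $\chi_1(\mu)<\chi_2(\mu)$ is already established inside the proof of Corollary \ref{Lyap}: equality would force $m$ to be an equilibrium state for the additive potential $\mathbf{i}\mapsto\frac{s}{2}\log|\det A_{\mathbf{i}}|$, hence quasi-Bernoulli, contradicting the explicit calculation in Section \ref{class} which rules out the quasi-Bernoulli property whenever the IFS mixes diagonal and anti-diagonal blocks with $a_i\neq b_i$. Finally, since $\nu=m_1\circ\tau^{-1}+m_2\circ\tau^{-1}$ by construction, the two $m_t\circ\tau$-a.e.\ statements combine to give $\nu$-a.e.\ convergence; writing $\tfrac{1}{n}\log(\alpha_2(\mathbf{i}|_n)/\alpha_1(\mathbf{i}|_n))\to-(\chi_2(\mu)-\chi_1(\mu))<0$ gives exponential decay of the ratio and in particular $\alpha_2(\mathbf{i}|_n)/\alpha_1(\mathbf{i}|_n)\to 0$.

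There is no real obstacle here beyond careful bookkeeping: the only subtlety is ensuring that applying Birkhoff on $(\Sigma_A,m_t)$ and pulling back along $\tau$ preserves the full-measure sets, which is immediate because $\tau:\Sigma\to\Sigma_A$ is a Borel isomorphism onto its image and $m_t\circ\tau$ is defined as the pushforward measure. All other steps are direct consequences of results already proved in the section on equilibrium states.
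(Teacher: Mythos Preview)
Your proposal is correct and follows exactly the route the paper intends: the paper gives no separate proof of this lemma, merely noting that it ``is essentially restating Corollary \ref{Lyap}'', and you have faithfully unpacked that corollary's argument (Lemma \ref{key} to express $\alpha_1,\alpha_2$ via Birkhoff sums of $f_{1,1},f_{2,1}$ along $\tau(\mathbf{i})$, Birkhoff's theorem on $(\Sigma_A,m_t)$ pulled back through the injection $\tau$, Lemma \ref{size} to identify which integral dominates, and the quasi-Bernoulli contradiction for strictness). One minor terminological slip: $m_t\circ\tau$ is defined in the paper as $E\mapsto m_t(\tau(E))$, so it is a pullback rather than a pushforward, but your use of it is correct since $m_t$-null sets in $\Sigma_A$ pull back to $m_t\circ\tau$-null sets in $\Sigma$.
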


The measures $m_t\circ \tau$ are not invariant and so measure theoretic entropy is not defined. However the following is true and is sufficient for our purposes.

\begin{lma}  \label{entropylemma}
For $t=1,2$, $m_t \circ \tau$ almost all $\mathbf{i} \in \Sigma$ satisfy
\[
 - \lim_{n \to \infty} \frac{1}{n}  \log  m_t \circ \tau([\mathbf{i}|_n])  = h(\mu).
\]
\end{lma}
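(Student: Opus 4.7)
The plan is to compare $m_t\circ\tau([\mathbf{i}|_n])$ with $m([\mathbf{i}|_n])$ at typical points and then invoke the Shannon-McMillan-Breiman theorem (Proposition \ref{SMB}) applied to the ergodic measure $m$. The starting point is the cylinder identity $\tau([\mathbf{i}|_n])=[\tau(\mathbf{i})|_n]$ in $\Sigma_A$, which holds because $\tau$ is coordinate-wise invertible on its image and because $\tau_1(\mathbf{i})=i_1\leq d$ forces every continuation of $\tau(\mathbf{i})|_n$ in $\Sigma_A$ to lie back in $\tau(\Sigma)$. Combined with the Gibbs property of $m_t$ this gives
\[
m_t\circ\tau([\mathbf{i}|_n]) \; \approx \; \exp\!\left(S_n f_{t,s}(\tau(\mathbf{i})) - nP(s)\right),
\]
while Corollary \ref{equivalence} together with Lemma \ref{key} gives
\[
m([\mathbf{i}|_n]) \; \approx \; \exp(-nP(s))\max\{\exp S_n f_{1,s}(\tau(\mathbf{i})),\exp S_n f_{2,s}(\tau(\mathbf{i}))\},
\]
using $P(s)=P_A(f_{1,s})=P_A(f_{2,s})$.

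Next I would show that, for $m_t\circ\tau$-almost every $\mathbf{i}$, the $t$-th term eventually realises the maximum, so the two displayed estimates differ only by a bounded multiplicative factor. Since $m_t$ is ergodic on the mixing subshift $\Sigma_A$, the Birkhoff ergodic theorem yields, for $m_t\circ\tau$-a.e.\ $\mathbf{i}\in\Sigma$,
\[
\frac{1}{n}\left(S_n f_{t,s}(\tau(\mathbf{i}))-S_n f_{3-t,s}(\tau(\mathbf{i}))\right) \; \longrightarrow \; \int (f_{t,s}-f_{3-t,s})\,\mathrm{d}m_t.
\]
Using Corollary \ref{Lyap} to rewrite each $\int f_{j,1}\,\mathrm{d}m_t$ in terms of $-\chi_1(\mu)$ or $-\chi_2(\mu)$, a short case split shows that this limit equals $s(\chi_2(\mu)-\chi_1(\mu))$ for $s\in(0,1]$ and $(2-s)(\chi_2(\mu)-\chi_1(\mu))$ for $s\in[1,2)$; in both cases it is strictly positive because $s<2$ and $\chi_1(\mu)<\chi_2(\mu)$. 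Consequently the ratio $\exp(S_n f_{3-t,s}(\tau(\mathbf{i}))-S_n f_{t,s}(\tau(\mathbf{i})))$ tends to $0$, confirming the desired domination.

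Finally, since $m=m_1\circ\tau+m_2\circ\tau$ as measures on $\Sigma$, every $m$-null set is automatically $m_t\circ\tau$-null, so the conclusion $-\frac{1}{n}\log m([\mathbf{i}|_n])\to h(\mu)$ from Proposition \ref{SMB} transfers to $m_t\circ\tau$-a.e.\ $\mathbf{i}$. Combining this with the bounded multiplicative comparison from the previous step produces the required
\[
-\frac{1}{n}\log m_t\circ\tau([\mathbf{i}|_n]) \; \longrightarrow \; h(\mu).
\]
The only real obstacle is the sign computation of $\int(f_{t,s}-f_{3-t,s})\,\mathrm{d}m_t$: one has to unwind the piecewise definitions of $f_{t,s}$ separately in the ranges $s\in(0,1]$ and $s\in[1,2)$, and it is precisely here that the assumption $s<2$ (together with the strict inequality $\chi_1(\mu)<\chi_2(\mu)$) is needed to keep the difference of integrals strictly positive.
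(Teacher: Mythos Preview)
Your argument is correct, but it takes a different route from the paper. The paper establishes the same domination $m_{3-t}\circ\tau([\mathbf{i}|_n])/m_t\circ\tau([\mathbf{i}|_n])\to 0$ (for $m_t\circ\tau$-a.e.\ $\mathbf{i}$) by a soft argument: $m_1$ and $m_2$ are distinct ergodic Gibbs measures on $\Sigma_A$, hence mutually singular, and then Mattila's density theorem (applied to the pushforwards $\mu_1,\mu_2$ on $F$) forces the cylinder ratio to vanish almost everywhere. From there it concludes exactly as you do, using $m=m_1\circ\tau+m_2\circ\tau$ to transfer Proposition~\ref{SMB}.

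You instead compute the log-ratio explicitly: by the Gibbs property it equals, up to $O(1)$, the Birkhoff difference $S_n f_{t,s}(\tau(\mathbf{i}))-S_n f_{3-t,s}(\tau(\mathbf{i}))$, whose Birkhoff average you evaluate via Lemma~\ref{size} and Corollary~\ref{Lyap} as $s(\chi_2(\mu)-\chi_1(\mu))$ or $(2-s)(\chi_2(\mu)-\chi_1(\mu))$, strictly positive because $s<2$ and $\chi_1(\mu)<\chi_2(\mu)$. This is more hands-on but has advantages: it stays entirely within the symbolic model (no appeal to Euclidean density theorems), and it yields the exponential rate of domination rather than a bare $o(1)$. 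The paper's version is shorter and sidesteps the piecewise case split on $s$.
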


\begin{proof}
Since $m_1$ and $m_2$ are distinct ergodic measures, they are mutually singular. Therefore $\mu_1$ is not absolutely continuous with respect to $\mu_2$. Thus by \cite[Theorem 2.1.2]{mabook}, 
$$\limsup_{r \to 0} \frac{\mu_2(B(x,r))}{\mu_1(B(x,r))} =0$$
for $\mu_1$ almost every $x$. Therefore
$$\limsup_{n \to \infty}\frac{m_2 \circ \tau([\i|_n])}{m_1\circ \tau([\i|_n])}=0$$
for $m_1 \circ \tau$ almost every $\i \in \Sigma$. In other words, for $m_1 \circ \tau$ almost every $\i$,  $m_1 \circ \tau([\i|_n])$ `dominates' $m_2 \circ \tau([\i|_n])$ for all large $n$ and 
$$h(\mu)=-\lim_{n \to \infty} \frac{1}{n} \log m([\i|_n])= -\lim_{n \to \infty}\frac{1}{n} \log m_1 \circ \tau([\i|_n])$$
for $m_1 \circ \tau$ almost all $\i$. By an analogous argument we obtain the same result for $m_2 \circ \tau$.
\end{proof}

Next, we obtain estimates for the projected measure of the blow up of a typical primary strip. We will let $s_1=\dim\pi_1(\mu_1)$.  The key point of the proof of this lemma is that an $m_1$-typical point $\tau(\mathbf{i})$ will regularly hit times $n$ when the $\mu$ measure of $B(\overline{\Pi}(\sigma^n(\tau(\textbf{i})),r)$ is sufficiently close to $r^{s_1}$ and the matrix $A_{i_1}\cdots A_{i_n}$ is diagonal (and the same for the measure $m_2$).

\begin{lma}
For $m$-almost every $\mathbf{i} \in \Sigma$ there exists a choice $t \in \{1,2\}$ and a strictly increasing sequence of positive integers  $n_k$ for which simultaneously $\mu_t(B(\mathbf{x}, n_k, r))$ satisfies the bound in Lemma \ref{measure2} for all $k \in \mathbb{N}$ and such that for all $\epsilon>0$ there exists $N_{\epsilon} \in \mathbb{N}$, such that for all $k \geq N_{\epsilon}$,
\begin{eqnarray}
&\,&  \hspace{-20mm}(s_1+ \epsilon) \log \frac{\alpha_2(\mathbf{i}|_{n_k})}{\alpha_1(\mathbf{i}|_{n_k})}\nonumber \\   & \leq&  \log \pi_s^{\mathbf{i}, n_k}(\mu)\left( Q_1\left( \pi_s^{\mathbf{i}, n_k}(\Pi(\sigma^{n_k}\mathbf{i}) , \frac{\alpha_2(\mathbf{i}|_{n_k})}{\alpha_1(\mathbf{i}|_{n_k})} \right) \right) \leq (s_1- \epsilon)\log \frac{\alpha_2(\mathbf{i}|_{n_k})}{\alpha_1(\mathbf{i}|_{n_k})} .
\label{ld bound}
\end{eqnarray}
Moreover, we can choose the sequence $n_k$ such that
\begin{eqnarray}
\lim_{k \to \infty} \frac{n_{k+1}}{n_k}  \   \to  \ 1.
\label{ratio}
\end{eqnarray}
\label{egorov}
\end{lma}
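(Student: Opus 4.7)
The plan is to combine four ingredients: (i) mutual singularity of the ergodic Gibbs measures $m_1$ and $m_2$ on $\Sigma_A$, which for $m$-a.e.\ $\mathbf{i}$ allows us to choose $t \in \{1,2\}$ so that $\mathbf{j} := \tau(\mathbf{i})$ is $m_t$-typical; (ii) exact dimensionality of $\pi_1(\mu_t)$ with dimension $s_1$ (Proposition~\ref{projections2}), coupled with Egorov's theorem to obtain uniform local-dimension control on a set of nearly full measure; (iii) Birkhoff's ergodic theorem applied to $(\Sigma_A, \sigma, m_t)$ to produce a subsequence of simultaneously good times with positive lower density; and (iv) the elementary observation that any subsequence of positive lower density automatically satisfies $n_{k+1}/n_k \to 1$.

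Concretely, fix $\epsilon, \delta > 0$ and produce via Egorov a Borel set $G_{\epsilon,\delta} \subset \R$ with $\pi_1(\mu_t)(G_{\epsilon,\delta}) \geq 1 - \delta$ on which
$$(s_1 + \epsilon) \log r \, \leq \, \log \pi_1(\mu_t)(Q_1(x, r)) \, \leq \, (s_1 - \epsilon) \log r$$
holds uniformly for $x \in G_{\epsilon,\delta}$ and $r \leq r_0$. I then consider $n$ satisfying three simultaneous events: (a) $\tau_{n+1}(\mathbf{i}) \leq d$, so $A_{i_1}\cdots A_{i_n}$ is diagonal and Lemma~\ref{measure2} applies; (b) the primary direction at time $n$ matches the asymptotically dominant axis-parallel direction for $\mathbf{i}$ (WLOG horizontal, after restricting to a sub-sequence and swapping $t$ if needed), so that $\pi_s^{\mathbf{i},n} = \pi_1$; (c) $\overline{\Pi}(\sigma^n \mathbf{j}) \in G_{\epsilon,\delta}$. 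At such $n$ the key identity $\pi_1(\Pi(\sigma^n \mathbf{i})) = \overline{\Pi}(\tau(\sigma^n \mathbf{i})) = \overline{\Pi}(\sigma^n \mathbf{j})$ holds (the second equality using that $\tau$ commutes with $\sigma^n$ precisely at diagonal times). Birkhoff applied in $(\Sigma_A, \sigma, m_t)$ gives each event positive lower density (noting $\overline{\Pi}(m_t)(G_{\epsilon,\delta}) > 0$ because $\pi_1(\mu_t) \ll \overline{\Pi}(m_t)$), so their intersection $\{n_k\}$ has positive lower density $\rho > 0$, yielding $n_k/k \to 1/\rho$ and hence (\ref{ratio}). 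Applying the uniform Egorov bound at radius $r_k = \alpha_2(\mathbf{i}|_{n_k})/\alpha_1(\mathbf{i}|_{n_k}) \to 0$ (Corollary~\ref{Lyap}) then gives the bound (\ref{ld bound}) with $\pi_1(\mu_t)$ in place of $\pi_s^{\mathbf{i},n_k}(\mu)$.

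The main technical obstacle is transferring this bound from $\pi_1(\mu_t)$ to the full sum $\pi_s^{\mathbf{i},n_k}(\mu) = \pi_1(\mu_t) + \pi_1(\mu_{3-t})$. The lower bound on the projected measure is immediate from $\pi_1(\mu) \geq \pi_1(\mu_t)$. The upper bound requires showing that the ``wrong'' component $\pi_1(\mu_{3-t})$ contributes negligibly near an $m_t$-typical projected point. Since $m_1 \perp m_2$ on $\Sigma_A$, the ratio-convergence theorem \cite[Theorem~2.12]{mabook}---already invoked in the proof of Lemma~\ref{entropylemma}---gives, at $\pi_1(\mu_t)$-a.e.\ $x$,
$$\frac{\pi_1(\mu_{3-t})(Q_1(x, r))}{\pi_1(\mu_t)(Q_1(x, r))} \longrightarrow 0 \quad \text{as } r \to 0,$$
so the two projected measures differ by a multiplicative factor eventually $\leq 2$, and the Egorov bound passes through after absorbing $\log 2$ into a slightly relaxed $\epsilon$. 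Imposing this Besicovitch-type control simultaneously with the positive-density requirements for (a)--(c), while keeping the choice of $t$ consistent across the subsequence, is the subtlest step of the proof.
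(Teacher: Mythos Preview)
Your overall strategy---Egorov on the exact-dimensional projected measure, Birkhoff to obtain a positive-density subsequence of good times, combined with the diagonal/primary-axis alignment---is exactly the paper's route. There is, however, one genuine gap. Your claim that mutual singularity of $m_1$ and $m_2$ on $\Sigma_A$ forces
\[
\frac{\pi_1(\mu_{3-t})(Q_1(x, r))}{\pi_1(\mu_t)(Q_1(x, r))} \longrightarrow 0
\]
at $\pi_1(\mu_t)$-a.e.\ $x$ is unjustified and in general false: singularity does not descend through non-injective projections, so two mutually singular measures on $\Sigma_A$ (or on $F$) can have one-dimensional projections that are mutually absolutely continuous. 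The parallel step in Lemma~\ref{entropylemma} works only because there the ratio is taken for $\mu_1,\mu_2$ themselves on $F\subset\R^2$, where the coding is injective under the strong separation property and $\mu_1\perp\mu_2$ genuinely holds.

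The repair is easy and is precisely what the paper does. Instead of comparing $\pi_1(\mu_{3-t})$ to $\pi_1(\mu_t)$, compare $\pi_1(\mu)$ to $\pi_1(\mu_t)$ directly. Since $\mu_t\le\mu$ we have $\pi_1(\mu_t)\ll\pi_1(\mu)$, and the differentiation theorem (the paper's appeal to \cite[Theorem~2.12]{mabook}) gives
\[
\lim_{r\to 0}\frac{\pi_1(\mu_t)(Q_1(x,r))}{\pi_1(\mu)(Q_1(x,r))}=\frac{d\pi_1(\mu_t)}{d\pi_1(\mu)}(x)\in(0,\infty)
\]
for $\pi_1(\mu_t)$-a.e.\ $x$. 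Hence the local dimensions of $\pi_1(\mu)$ and $\pi_1(\mu_t)$ coincide at $\pi_1(\mu_t)$-a.e.\ point, and one may simply build the Egorov set for $\pi_t(\mu)$ from the outset (which is what the statement of the lemma requires) rather than for $\pi_1(\mu_t)$, eliminating the transfer step. With this correction your argument matches the paper's. Two minor remarks: your item (iv) should say ``positive density'' rather than ``positive lower density'', since the latter alone does not force $n_{k+1}/n_k\to 1$ (though you use the correct form later); and your event (b) in fact holds for \emph{all} sufficiently large $n$ by the strict Lyapunov inequality in Corollary~\ref{Lyap}, so it need not be intersected with the Birkhoff set.
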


\begin{proof}
Recall that $\tau: \Sigma \to \tau\left(\Sigma\right)$ is one-to-one and thus has an inverse. With slight abuse of notation, for $\mathbf{i} \in \tau(\Sigma)$ denote  $\Pi(\mathbf{i})=\Pi(\tau^{-1}(\mathbf{i}))$ and $\alpha_r(\mathbf{i}|_n)=\alpha_r(\tau^{-1}(\mathbf{i})|_n)$ for $r=1,2$. We will show that for each $t=1,2$, for $m_t$-almost every $\mathbf{i} \in \tau(\Sigma)$, there exists a sequence $n_k$ such that (\ref{ld bound}) holds for $\mu_t$. Then the result will follow because the union of the pre-images under $\tau$ of these full measure sets for $m_1$ and $m_2$ have full $m$-measure. 

Fix $t \in \{ 1, 2\}$ and define the function $h_{t}^{(l)}: \tau(\Sigma) \to \mathbb{R}$ by
$$h_{t}^{(l)}(\mathbf{i})= \frac{ \log \pi_t(\mu)\left( Q_1\left( \pi_t(\Pi(\mathbf{i})), \frac{1}{l}\right)\right)}{\log \frac{1}{l}}$$
By Proposition \ref{projections2} each of $\pi_t(\mu_t)$ are exact dimensional with dimension $s_1$ and combining this with Theorem 2.12 from \cite{mabook} we deduce that for $m_t$-almost every $\mathbf{i} \in \tau(\Sigma)$
 $$\lim_{l \to \infty} h_{t}^{(l)}(\mathbf{i})=s_1$$ 
for $m_t$-almost every $\mathbf{i}\in \tau(\Sigma)$.
By Egorov's theorem, there exists a set $G_{t} \subset \tau(\Sigma)$ with measure $m_t(G_{t})\geq m_t(\tau(\Sigma))/2>0$, for which $h_{t}^{(n)}$  converges uniformly to $s_1$ for all $\mathbf{i} \in G_{t}$. In particular this means that for all $\epsilon >0$, there exists $L_{\epsilon} \in \mathbb{N}$ such that for $l \geq L_{\epsilon}$,
$$s_1- \epsilon \leq \frac{ \log \pi_t(\mu)\left( Q_1\left( \pi_t(\Pi(\mathbf{i})) , \frac{1}{l} \right) \right)}{\log \frac{1}{l}} \leq s_1+ \epsilon $$
for all $\mathbf{i} \in G_{t}$.  Moreover, by the Birkhoff Ergodic Theorem,
$$\lim_{n \to \infty} \frac{1}{n}\sum_{k=0}^{n-1} \textbf{1}_{G_{t}}(\sigma^{k}\mathbf{i}) \ =  \ \int \textbf{1}_{G_{t}} \, dm_t>0$$
for $m_t$-almost every $\mathbf{i} \in \tau(\Sigma)$. In other words, for $m_t$-almost every $\mathbf{i} \in \tau(\Sigma)$ we have that $\sigma^{n} \mathbf{i} \in G_{t}$ with  frequency greater than $0$.  Therefore, for such a fixed $\mathbf{i} \in \tau(\Sigma)$, we can choose  $n_{k}$ to be the subsequence of positive integers  such that $\sigma^{n_{k}}\mathbf{i} \in G_{t}$ for all $k \in \mathbb{N}$. By Lemma \ref{sequence1} we can choose $N_{\epsilon} \in \mathbb{N}$ such that for all $n \geq N_{\epsilon}$, 
$$\frac{\alpha_2(\mathbf{i}|_n)}{\alpha_1(\mathbf{i}|_n)}<\frac{1}{L_{\epsilon}}$$
Therefore, for $k \geq N_{\epsilon}$, we have 
$$(s_1+ \epsilon) \log \frac{\alpha_2(\mathbf{i}|_{n_{k}})}{\alpha_1(\mathbf{i}|_{n_{k}})}\leq  \log \pi_t(\mu)\left( Q_1\left( \pi_t(\Pi(\sigma^{n_{k}}\mathbf{i}) , \frac{\alpha_2(\mathbf{i}|_{n_{k}})}{\alpha_1(\mathbf{i}|_{n_{k}})} \right) \right) \leq (s_1- \epsilon)\log \frac{\alpha_2(\mathbf{i}|_{n_{k}})}{\alpha_1(\mathbf{i}|_{n_{k}})}. $$

 We now need to show that for $m_t$ almost all such $\mathbf{i}$ and large enough $k$, we have $\pi_s^{\mathbf{i}, n_k}= \pi_p^{\mathbf{i}, n_k}= \pi_t$, and (\ref{ld bound}) follows.  The fact that $\mathbf{i}|_{n_k}$ is in the diagonal case implies that $\pi_s^{\mathbf{i}, n_k}= \pi_p^{\mathbf{i}, n_k}$.  Moreover it follows by Lemmas \ref{Lyap} and \ref{size} that, for $k$ sufficiently large, for $m_t$ almost all $\mathbf{i}$
\[
S_{n_k}f_{t,1}(\mathbf{i})> S_{n_k}f_{t',1}(\mathbf{i}) \qquad (t' \neq t)
\]
Which in turn implies that the longer side of the rectangle $S_{\mathbf{i}|_{n_k}}([0,1]^2)$ is the horizontal side if $t=1$ and vertical side if $t=2$.  Therefore $ \pi_p^{\mathbf{i}, n_k}= \pi_t$ as claimed.

It only remains to prove that  $\frac{n_{k+1}}{n_k} \to 1$ as $k \to \infty$. To see this let $S_{n_{k}} = \sum_{r=0}^{n_{k}} \chi_{G_{t}} (\sigma^r(\mathbf{i}))$. Then the ergodic theorem tells us that $\lim_{k\to \infty} \frac{S_{n_{k}}}{n_{k}}=m_t(G_t)>0$. Moreover, clearly $\frac{S_{n_{k+1}}}{n_{k+1}} = \frac{S_{n_{k}} +1}{n_{k+1}}$. Now,
\begin{eqnarray*}
 \left|\frac{S_{n_{k}}}{n_{k}}\left(\frac{n_{k}}{n_{k+1}} -1\right) +\frac{1}{n_{k+1}}\right|& =& \left|\frac{S_{n_{k}} +1}{n_{k+1}} - \frac{S_{n_{k}}}{n_{k}}\right| \to 0
\end{eqnarray*}
as $k \to \infty$.
Since $\frac{1}{n_{k+1}} \to 0$ as $k \to \infty$ and $\frac{S_{n_{k}}}{n_{k}} \to m_t(G_t)>0$ as $k \to \infty$ it follows that $\frac{n_{k}}{n_{k+1}} -1 \to 0$ as $k \to \infty$, in other words $\frac{n_{k}}{n_{k+1}}$ and $\frac{n_{k+1}}{n_k} \to 1$ as $k\to \infty$.
\end{proof}

By an easy modification of the above proof we can also obtain similar bounds on the projections of $\mu_t$.

\begin{lma} \label{egorov2}
For $m$ almost every $\mathbf{i} \in \Sigma$ there exists a choice of $t \in \{1,2\}$ such that the conclusion of Lemma \ref{egorov} holds with $\mu$ replaced by $\mu_t$ in (\ref{ld bound}).
\end{lma}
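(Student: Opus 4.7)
The plan is to imitate the proof of Lemma \ref{egorov} line by line, with the single change of replacing the full pushforward $\pi_t(\mu)$ by $\pi_t(\mu_t)$ in the definition of the auxiliary function $h_t^{(l)}$.  Concretely, for $t\in\{1,2\}$ and $\mathbf{i}\in\tau(\Sigma)$ I would set
$$h_t^{(l)}(\mathbf{i}) \ = \ \frac{\log \pi_t(\mu_t)\bigl(Q_1(\pi_t(\Pi(\mathbf{i})), 1/l)\bigr)}{\log(1/l)}.$$
The point is that Proposition \ref{projections2} asserts directly that $\pi_t(\mu_t)$ is exact dimensional of dimension $s_1$, so combined with Theorem 2.12 of \cite{mabook} this already gives $h_t^{(l)}(\mathbf{i})\to s_1$ for $m_t$-almost every $\mathbf{i}\in\tau(\Sigma)$; no further preparation is needed.

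From this point the argument proceeds exactly as in the proof of Lemma \ref{egorov}.  Apply Egorov's theorem to extract a subset $G_t\subset\tau(\Sigma)$ of positive $m_t$-measure on which $h_t^{(l)}$ converges uniformly to $s_1$, and then apply the Birkhoff ergodic theorem to the invariant ergodic measure $m_t$ on $\Sigma_A$ to produce, for $m_t$-almost every $\mathbf{i}$, a strictly increasing sequence of return times $n_k$ with $\sigma^{n_k}\mathbf{i}\in G_t$.  Since $G_t\subseteq\tau(\Sigma)$, at each such time the first coordinate of $\sigma^{n_k}\mathbf{i}$ lies in $\{1,\dots,d\}$, which (translated back to $\Sigma$) is exactly the diagonal condition $\tau_{n_k+1}(\mathbf{i})\leq d$ required in Lemma \ref{measure2} and which forces $\pi_s^{\mathbf{i},n_k}=\pi_p^{\mathbf{i},n_k}$.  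The further identification $\pi_p^{\mathbf{i},n_k}=\pi_t$ for all sufficiently large $k$ is then obtained, as before, by combining Corollary \ref{Lyap} with Lemma \ref{size}, since these ensure that for large $k$ the longer side of $S_{\mathbf{i}|_{n_k}}([0,1]^2)$ lies along the $t$-th coordinate axis.  Finally, the density statement $n_{k+1}/n_k\to 1$ follows from $m_t(G_t)>0$ and the ergodic theorem by the identical estimate as in the previous proof.

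The only step that required any external input in the proof of Lemma \ref{egorov} was the pointwise convergence of $h_t^{(l)}$, and in the present setting that step is in fact easier: Proposition \ref{projections2} hands us exact dimensionality of $\pi_t(\mu_t)$ directly, whereas to deduce the same for $\pi_t(\mu)=\pi_t(\mu_1)+\pi_t(\mu_2)$ in Lemma \ref{egorov} one must additionally control how the summand $\pi_t(\mu_{t'})$ ($t'\neq t$) behaves at a $\mu_t$-typical point.  For this reason I do not anticipate any substantive obstacle — this is precisely the ``easy modification'' of the previous argument that the authors refer to.
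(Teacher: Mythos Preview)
Your proposal is correct and follows exactly the modification the paper intends: replace $\pi_t(\mu)$ by $\pi_t(\mu_t)$ in the definition of $h_t^{(l)}$ and rerun the Egorov/Birkhoff argument verbatim, using Proposition~\ref{projections2} for the exact dimensionality input. Your observation that the pointwise-convergence step is actually \emph{simpler} here (since one no longer needs Theorem~2.12 of \cite{mabook} to handle the extra summand $\pi_t(\mu_{t'})$) is accurate and worth keeping.
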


To prove Theorem \ref{main} it suffices to show that the local dimension of $\mu$ is what it should be at $x = \Pi(\mathbf{i})$ for  $\textbf{i} $ in a set of full $m$-measure. The proof will be split into two parts, concerning the lower and upper bound respectively.

\subsection{The lower bound}

Let $\mathbf{i} \in \Sigma$ belong to the set of full measure for which the conclusions of Propositions \ref{oseledets}, \ref{SMB} and Lemma \ref{egorov} hold simultaneously. In particular, let $t \in \{1,2\}$ be such that Lemma \ref{egorov} is satisfied for $m_t$. Write $\mathbf{x}=\Pi(\mathbf{i})$. 

Since $F$ satisfies the \emph{strong separation property}, there exists $\delta >0$, such that for any $\mathbf{j}= (j_1, j_2, \dots) \in \Sigma$ and $\mathbf{j}'= (j'_1, j'_2, \dots) \in \Sigma$ with $j_1 \neq j'_1$ we have $d(\Pi(\mathbf{j}), \Pi(\mathbf{j}')) \geq \delta$ where $d$ is the usual Euclidean metric. In other words, the components of $F$ are $\delta$-separated on the first level.

Consider the square $Q_2(\mathbf{x}, \delta \alpha_2(\mathbf{i}|_n))$. Observe that since any cylinder on the $n$th level which is distinct from $S_{\mathbf{i}|_n}([0,1]^2)$ must be at least $\delta \alpha_2(\mathbf{i}|_{n-1})$-separated from $S_{\mathbf{i}|_n}([0,1]^2)$ and therefore $Q_2(\mathbf{x}, \delta \alpha_2(\mathbf{i}|_n))$ only intersects the cylinder $S_{\mathbf{i}|_n}([0,1]^2)$. Therefore, it is easy to see that
$$Q_2(\mathbf{x}, \delta \alpha_2(\mathbf{i}|_n)) \cap F \subseteq B(\mathbf{x}, n, \alpha_2(\mathbf{i}|_n)).$$
By Lemma \ref{measure}, it follows that 
$$\mu(Q_2(\mathbf{x}, \delta \alpha_2(\mathbf{i}|_n))) \  \leq  \ m([\mathbf{i}|_n])\pi_s^{\mathbf{i},n}(\mu)\left(Q_1\left(\pi_s^{\mathbf{i},n}(\Pi(\sigma^n(\mathbf{i})), \frac{\alpha_2(\mathbf{i}|_n)}{\alpha_1(\mathbf{i}|_n)}\right)\right). $$

Fix $\epsilon >0$ and let $n_k$ be the subsequence from Lemma \ref{egorov}.  Observing that the sequence $\delta \alpha_2(\mathbf{i}|_{n_k})$ strictly decreases to zero, for any sufficiently small $r>0$ we can choose $k \in \mathbb{N}$ large enough such that 
\[
\delta \alpha_2(\mathbf{i}|_{n_{k+1}}) \leq r \leq \delta \alpha_2(\mathbf{i}|_{n_k}).
\]
 Assume $r>0$ is small enough to ensure $k \geq N_{\epsilon}$ and then we have
\begin{eqnarray*}
\frac{\log \mu\left( Q_2\left(\Pi(\mathbf{i}), r\right)\right)}{\log r} &\geq& \frac{\log \mu\left( Q_2\left(\Pi(\mathbf{i}), \delta\alpha_2(\mathbf{i}|_{n_k})\right)\right)}{\log \delta \alpha_2(\mathbf{i}|_{n_{k+1}})} \\ \\
&\geq& \frac{\log m([\mathbf{i}|_{n_k}]) + \log \pi_s^{\mathbf{i}, n_k}(\mu)\left(Q_1\left(\pi_s^{\mathbf{i}, n_k}(\Pi(\sigma^{n_k}\mathbf{i})), \frac{\alpha_2(\mathbf{i}|_{n_k})}{\alpha_1(\mathbf{i}|_{n_k})}\right)\right)}{\log \delta \alpha_2(\mathbf{i}|_{n_{k+1}})} \\ \\
&\geq& \frac{  \log m([\mathbf{i}|_{n_k}])  + (s_1- \epsilon) \log \frac{\alpha_2(\mathbf{i}|_{n_k})}{\alpha_1(\mathbf{i}|_{n_k})}}{\log \delta \alpha_2(\mathbf{i}|_{n_{k+1}})} \\ \\
&=& \frac{  -\frac{1}{n_k}\log m([\mathbf{i}|_{n_k}])-\frac{1}{n_k} (s_1 -  \epsilon) \log \frac{\alpha_2(\mathbf{i}|_{n_k})}{\alpha_1(\mathbf{i}|_{n_k})}}{- \frac{1}{n_k} \log \delta- \frac{1}{n_{k+1}}\frac{n_{k+1}}{n_k}\log \alpha_2(\mathbf{i}|_{n_{k+1}})} \\ \\
& \to & \frac{h(\mu) + (s_1-\varepsilon) (\chi_2(\mu)-\chi_1(\mu))}{\chi_2(\mu)}
\end{eqnarray*}
as $r \to 0$ ($k \to \infty$).  Finally, letting $\epsilon \to 0$ yields the desired lower bound.

\subsection{The upper bound}

Let $\mathbf{i} \in \Sigma$ belong to the set of full measure for which the conclusions of Propositions \ref{oseledets}, \ref{SMB} and Lemma \ref{egorov2} hold simultaneously. In particular, let $t \in \{1,2\}$ be such that Lemma \ref{egorov2} is satisfied for $m_t$. Let $n_k$ be the subsequence for $\mathbf{i}$ from Lemma \ref{egorov2}. Write $\mathbf{x}=\Pi(\mathbf{i})$. 

Consider the square $Q_2(\mathbf{x}, 2\alpha_2(\mathbf{i}|_{n_k}))$. Clearly 
$$B(\mathbf{x}, n_k, \alpha_2(\mathbf{i}|_{n_k})) \subseteq Q_2(\mathbf{x}, 2\alpha_2(\mathbf{i}|_{n_k})) \cap F $$
and therefore by Lemma \ref{measure2} it follows that 
$$\nu \circ \Pi^{-1}(Q_2(\mathbf{x}, 2 \alpha_2(\mathbf{i}|_{n_k})))  \ \geq  \  Cm_t \circ \tau([\mathbf{i}|_{n_k}])\pi_s^{\mathbf{i},n_k}(\mu_t)\left(Q_1\left(\pi_s^{\mathbf{i},n_k}(\Pi(\sigma^{n_k}(\mathbf{i})), \frac{\alpha_2(\mathbf{i}|_{n_k})}{\alpha_1(\mathbf{i}|_{n_k})}\right)\right). $$
Let $\epsilon>0$.  Consider small $r>0$, and since the sequence $2\alpha_2(\mathbf{i}|_{n_k})$  strictly decreases to zero we can choose $k$ such that
\[
2\alpha_2(\mathbf{i}|_{n_{k+1}}) \leq r \leq 2 \alpha_2(\mathbf{i}|_{n_k}).
\]
Assume $r$ is small enough to guarantee $k \geq N_{\epsilon}$.  Then by Lemmas \ref{measure2} and \ref{egorov2} we have
\begin{eqnarray*}
&\,& \hspace{-10mm} \frac{\log \nu \circ \Pi^{-1}\left( Q_2\left(\Pi(\mathbf{i}), r\right)\right)}{\log r}  \\ \\
&\leq& \frac{\log \nu \circ \Pi^{-1}\left( Q_2\left(\Pi(\mathbf{i}), 2\alpha_2(\mathbf{i}|_{n_{k+1}})\right)\right)}{\log 2\alpha_2(\mathbf{i}|_{n_{k}})} \\ \\
&\leq&   \frac{ \log Cm_t \circ \tau([\mathbf{i}|_{n_{k+1}}]) + \log \pi_s^{\mathbf{i}, n_{k+1}}(\mu_t)\left(Q_1\left(\pi_s^{\mathbf{i}, n_{k+1}}(\Pi(\sigma^{n_{k+1}}\mathbf{i})), \frac{\alpha_2(\mathbf{i}|_{n_{k+1}})}{\alpha_1(\mathbf{i}|_{n_{k+1}})}\right)\right)}{\log 2\alpha_2(\mathbf{i}|_{n_k})} \\ \\
&\leq& \frac{\log C+  \log m_t \circ \tau([\mathbf{i}|_{n_{k+1}}]) +( s_1+\epsilon) \log \frac{\alpha_2(\mathbf{i}|_{n_{k+1}})}{\alpha_1(\mathbf{i}|_{n_{k+1}})}}{\log 2 \alpha_2(\mathbf{i}|_{n_{k}})} \\ \\
&=& \frac{ -\frac{1}{n_{k+1}} \log C -\frac{1}{n_{k+1}}\log  \left( m_t \circ \tau([\mathbf{i}|_{n_{k+1}}]) \right) - \frac{1}{n_{k+1}} (s_1+ \epsilon)  \log \frac{\alpha_2(\mathbf{i}|_{n_{k+1}})}{\alpha_1(\mathbf{i}|_{n_{k+1}})}}{- \frac{1}{n_{k+1}} \log 2- \frac{1}{n_{k}}\frac{n_k}{n_{k+1}}\log \alpha_2(\mathbf{i}|_{n_{k}})} \\ \\
& \to & \frac{h(\mu) + (s_1+\varepsilon) (\chi_2(\mu)-\chi_1(\mu))}{\chi_2(\mu)}
\end{eqnarray*}
by Lemmas \ref{sequence1} and \ref{entropylemma} as $r \to 0$ ($k \to \infty$).  Finally,  letting $\epsilon \to 0$ yields the desired upper bound, and the result follows.\qed

\vspace{5mm}

\begin{centering}
\textbf{Acknowledgements} \\
This project grew out of  NJ's  Masters project at the University of Bristol, which was supervised by TJ.  A large part of the research was conducted whilst all three authors were in attendance at  the ICERM Semester Program on \emph{Dimension and Dynamics} in Spring 2016.  JMF was financially supported by a \emph{Leverhulme Trust Research Fellowship} (RF-2016-500). NJ was financially supported by the \emph{Leverhulme Trust} (RPG-2016-194). The authors thank Antti K\"aenm\"aki and Mike Todd for helpful comments.
\end{centering}


\begin{thebibliography}{99}








\bibitem[B]{baranski}
K.~Bara\'nski.
Hausdorff dimension of the limit sets of some planar geometric constructions,
{\em Adv. Math.}, {\bf 210}, (2007), 215--245.

\bibitem[Ba]{barany}
B. B\'ar\'any, On the Ledrappier-Young formula for self-affine measures, \emph{Math. Proc. Camb. Phil. Soc.}, {\bf 159}, (2015), 405--432.

\bibitem[BK]{baranykaenmaki}
B. B\'ar\'any and A. K\"aenm\"aki,  Ledrappier-Young formula and exact dimensionality of  self-affine measures, \emph{Adv. Math.}, {\bf  318}, (2017), 88--129.

\bibitem[BKK]{baranykaenmakikoivusalo}
B. B\'ar\'any,  A. K\"aenm\"aki and H. Koivusalo, Dimension of self-affine sets for fixed translation vectors, to appear in \emph{J. London Math. Soc.}, available at https://arxiv.org/abs/1611.09196.


\bibitem[BR]{baranyrams}
B. B\'ar\'any and M. Rams, Dimension maximizing measures for self-affine systems, \emph{Trans. Amer. Math. Soc.,}, {\bf 370}, (2018), no. 1, 553�-576. 

\bibitem[BPS]{BPS}
L. Barreira, Y. Pesin and J. Schmeling, Dimension and product structure of hyperbolic measures, \emph{Ann. of Math.}, {\bf 149}, (1999), 755--783.


\bibitem[Be]{bedford}
T. Bedford.
 Crinkly curves, Markov partitions and box dimensions in self-similar sets,
 {\em Ph.D dissertation, University of Warwick}, (1984).

\bibitem[F]{affine}
K.~J. Falconer.
 The Hausdorff dimension of self-affine fractals,
 {\em Math. Proc. Camb. Phil. Soc.}, {\bf 103}, (1988), 339--350.

\bibitem[FH]{fenghu}
D.-J. Feng and H. Hu, Dimension Theory of Iterated Function Systems, \emph{Comm. Pure Appl. Math.}, {\bf  62} (2009), 1435--1500.

\bibitem[FK1]{fk}
K.~J. Falconer and T. Kempton, Planar self-affine sets with equal Hausdorff, box and affinity dimensions, \emph{Ergodic Theory Dynam. Systems}, published online.

\bibitem[FK2]{FK}
D.-J. Feng and A. K\"aenm\"aki, Equilibrium states of the pressure function for products of matrices. \emph{Discrete Contin. Dyn. Syst.} {\bf 30} (2011), no. 3, 699--708.

\bibitem[FL]{FL}
D.-J. Feng and K.-S. Lau, The pressure function for products of non-negative matrices, \emph{Math. Res. Lett.}, {\bf 9} (2002), no. 2-3, 363--378.

\bibitem[FW]{fengwang}
D.-J. Feng and Y. Wang.
A class of self-affine sets and self-affine measures,
{\em J. Fourier Anal. Appl.}, {\bf11}, (2005), 107--124.

\bibitem[Fr]{fraser_box}
J. M. Fraser.
On the packing dimension of box-like self-affine sets in the plane,
\emph{Nonlinearity}, {\bf25}, (2012), 2075--2092.






\bibitem[GL]{lalley-gatz}
D. Gatzouras and S.~P. Lalley.
Hausdorff and box dimensions of certain self-affine fractals,
 {\em Indiana Univ. Math. J.}, {\bf 41}, (1992), 533--568.

\bibitem[H1]{hochman1}
M.~Hochman.
On self-similar sets with overlaps and inverse theorems for entropy,
 {\em Ann. of Math.}, {\bf 180}, (2014), 773--822.

\bibitem[H2]{hochman2}
M.~Hochman.
On self-similar sets with overlaps and inverse theorems for entropy in $\R^d$, \emph{Mem. Amer. Math. Soc. (to appear)}, available at  https://arxiv.org/abs/1503.09043.


\bibitem[JPS]{jordanetal}
T. Jordan, M. Pollicott and K. Simon.
Hausdorff dimension for randomly perturbed self-affine attractors,
\emph{Commun. Math. Phys.}, {\bf 270}, (2007), 519--544.

\bibitem[K]{kaenmaki}
A. K\"aenm\"aki, On natural invariant measures on generalised iterated function systems, \emph{Ann. Acad. Sci. Fenn. Math.}, {\bf 29}, (2004),  419--458.

\bibitem[KP]{kenyonperes}
R. Kenyon and Y. Peres.
Measures of full dimension on affine-invariant sets,
\emph{Ergodic Theory Dynam. Systems}, {\bf 16}, (1996), 307--323.



\bibitem[KSS]{KSS}
 M. Keane, K. Simon and B. Solomyak, The dimension of graph directed attractors with overlaps on the line, with an application to a problem in fractal image recognition. \emph{Fund. Math.} {\bf 180} (2003), no. 3, 279--292. 


\bibitem[LY1]{ledrappieryoung1}
 F. Ledrappier and L.-S. Young, The metric entropy of diffeomorphisms. I. Characterization of measures satisfying
Pesin’s entropy formula. \emph{Ann. of Math.}, {\bf 122}, (1985), 509--539.

\bibitem[LY2]{ledrappieryoung2}
 F. Ledrappier and L.-S. Young, The metric entropy of diffeomorphisms. II. Relations between entropy, exponents
and dimension. \emph{Ann. of Math.}, {\bf 122}, (1985),  540--574.

\bibitem[LM]{lind}
 D. Lind and B. Marcus. 
\emph{An introduction to symbolic dynamics and coding}, Cambridge University Press, 1995.

\bibitem[M]{mabook}
P. Mattila.
\emph{Geometry of sets and measures in Euclidean spaces. Fractals and rectifiability. }
Cambridge Studies in Advanced Mathematics, 44. Cambridge University Press, Cambridge, 1995. 

\bibitem[MMR]{mattilamoran}
P. Mattila, M. Mor\'an and J.-M. Rey.
Dimension of a measure,
 {\em Studia Math.}, {\bf 142}, (2000), 219--233.

\bibitem[Mc]{mcmullen}
C. McMullen.
 The Hausdorff dimension of general Sierpi\'nski carpets,
 {\em Nagoya Math. J.}, {\bf 96}, (1984), 1--9.

\bibitem[Mo1]{morris}
I. Morris, 
An explicit formula for the pressure of box-like affine iterated function systems, to appear in \emph{Journal of Fractal Geometry}, available at https://arxiv.org/pdf/1703.09097.

\bibitem[Mo2]{morris2}
I. Morris, Ergodic properties of matrix equilibrium states,  to appear in \emph{Ergodic Theory Dynam. Systems }, available at https://arxiv.org/abs/1603.01744.

\bibitem[MS]{MS}
I. Morris and P. Shmerkin, On equality of Hausdorff and affinity dimensions, via self-affine measures on positive subsystems, to appear in \emph{Trans. Amer. Math. Soc.},  available at https://arxiv.org/abs/1602.08789.



\bibitem[PU]{PU}
F. Przytycki and M. Urba\'nski.
On the Hausdorff dimension of some fractal sets, {\em Studia Math.} {\bf 93} (1989), no. 2, 155--186.

\end{thebibliography}
\end{document}